\newtheorem{theorem}{Theorem}[section]
\newtheorem{lemma}[theorem]{Lemma}
\newtheorem{proposition}[theorem]{Proposition}
\theoremstyle{definition}
\newtheorem{definition}[theorem]{Definition}
\newtheorem{remark}[theorem]{Remark}
\numberwithin{equation}{section}
\newcommand{\N}{\mathbb{N}}
\newcommand{\R}{\mathbb{R}}
\newcommand{\C}{\mathbb{C}}
\renewcommand{\Re}{\operatorname{Re}}
\renewcommand{\Im}{\operatorname{Im}}
\newcommand{\I}{\mathrm{i}}
\newcommand{\e}{\mathrm{e}}
\newcommand{\eps}{\varepsilon}
\newcommand{\vphi}{\varphi}
\newcommand{\D}{\mathcal{D}}
\newcommand{\MS}{\mathcal{S}}
\newcommand{\ML}{\mathcal{L}}
\newcommand{\F}{\mathcal{F}}
\newcommand{\fracD}[3]{\prescript{}{#1}{D}^{#2}_{#3}}	
\newcommand{\mus}{\mu_{\sigma}}
\newcommand{\mue}{\mu_{\eps}}
\newcommand{\nues}{\nu_{\eps,\sigma}}
\DeclareMathOperator{\supp}{supp}
\DeclareMathOperator{\sgn}{sgn}
\begin{document}

\title{Distributed-order time-fractional wave equations}

\author[F. Broucke]{Frederik Broucke}
\thanks{Frederik Broucke was supported by the Ghent University BOF-grant 01J04017}

\author[{Lj}. Oparnica]{Ljubica Oparnica}
\thanks{Ljubica Oparnica was supported by the FWO Odysseus 1 grant no. G.0H94.18N: Analysis and Partial Differential Equations}

\address{Department of Mathematics: Analysis, Logic and Discrete Mathematics\\ Ghent University\\ Krijgslaan 281\\ 9000 Gent\\ Belgium}
\email{fabrouck.broucke@ugent.be}
\email{oparnica.ljubica@ugent.be}

\begin{abstract}
Distributed-order time-fractional wave equations appear in the modeling of wave propagation in viscoelastic media. The material characteristics of the medium are modeled through constitutive functions or distributions in the distributed-order constitutive law.  
In this work we propose to take positive Radon measures for the constitutive ``functions''.
 First, we derive a thermodynamical restriction on the constitutive measures which is easy to check, and therefore suitable for applications. Then we prove that the setting with measures in combination with the derived thermodynamical restriction guarantee existence and uniqueness of solutions for the distributed-order fractional wave equation. We further discuss the support and regularity of the fundamental solution, and conclude with a discussion on wave velocities.
\end{abstract}
\keywords{Fractional wave equation, Distributed-order fractional derivative, Constitutive equation, Positive Radon measures, Wave speed}
\subjclass[2020]{Primary: 35R11, 74J05; Secondary: 35B65, 74D05, 28A25}



\maketitle

\section{Introduction}
The distributed-order time-fractional wave equation describes waves occurring in viscoelastic media and is obtained from the system of basic equations of elasticity: the equation of motion coming from Newton's second law, the stress-strain relation given via the distributed-order constitutive equation, and the strain measure:
\begin{align}
	\dpd{}{x}\sigma 	(x,t)					&= \dpd[2]{}{t} u(x,t),  \label{eq: NL} \\
	\int_{0}^{1}\phi_{\sigma}(\alpha)\fracD{0}{\alpha}{t}\sigma(x,t)\dif \alpha
									&= \int_{0}^{1}\phi_{\eps}(\alpha)\fracD{0}{\alpha}{t}\eps(x,t)\dif\alpha, \label{eq: const. equation}\\
	\varepsilon (x,t)						&=  \dpd{}{x} u(x,t). \label{eq: sm}
\end{align}
Here $\sigma$, $u$, and $\varepsilon$ denote stress, displacement, and strain, respectively, considered as functions of $x\in \mathbb{R}$ and $t>0$; $\fracD{0}{\alpha}{t}$, $0 \le \alpha \le 1$, is the left Riemann-Liouville operator of fractional differentiation; 
 and $\phi_{\sigma}$ and $\phi_{\eps}$ are so-called \emph{constitutive functions}. 
 %
 %

The distributed-order constitutive law for a viscoelastic body (\ref{eq: const. equation}) was proposed in \cite{Atanackovic2002} as a generalization of previously known and widely used linear constitutive laws for viscoelastic media, such as the (fractional) Zener constitutive law, the classical and the fractional Maxwell or Voigt models, or the linear constitutive laws investigated in \cite{KOZ2011} (see also \cite{Mainardi} for a detailed and modern introduction to the modeling of viscoelastic materials via fractional differential operators). 

The left (resp.\ right) hand side of \eqref{eq: const. equation} is called a distributed-order fractional derivative (DOFD) of $\sigma$ (resp.\ $\eps$). A DOFD represents a weighted average of derivatives of different fractional orders. Due to their far-reaching applicability, the distributed-order fractional calculus is a rapidly emerging branch within the field of fractional calculus. For a detailed review of the work in the field of DOFDs and applications we refer to \cite{DOreview}. 
The main scientific branches that find successful application of modeling with DOFDs are the fields in which already (ordinary) fractional calculus have proved to be successful. In viscoelasticity, which is our case, DOFDs are used for material characterization, in control theory for enhancing the flexibility and robustness of controllers, and furthermore transport processes,  anomalous, reaction-diffusion and advection processes are successfully modelled with DOFDs. Although our work is related to applications in viscoelasticity, the results could be adapted for applications in other fields.
%
%

Under suitable conditions as shown in \cite{KOZ2019}, 
the system (\ref{eq: NL})-(\ref{eq: sm}) gives rise to the following equation for the displacement $u$, called the distributed-order time-fractional wave equation: 
\begin{equation}
\label{eq: DFWE}
\dpd[2]{}{t}u(x,t) = L(t)\ast_{t}\dpd[2]{}{x}u(x,t), \qquad x\in\R, \quad t>0.
\end{equation} 
Here, $L$ is a convolution kernel depending on the constitutive functions $\phi_{\sigma}$ and $\phi_{\eps}$ and is of the form given below by \eqref{eq: L}. 

In \cite{KOZ2019}, the constitutive law \eqref{eq: const. equation} was employed in the distributional setting, where one allows the constitutive ``functions'' $\phi_{\sigma}$, $\phi_{\eps}$ to be compactly supported distributions (the integral then being replaced by the distributional action \eqref{DefInt}, see \cite{AOP2009} for more details). In \cite{KOZ2019}, six technical conditions  (reviewed later in Subsection \ref{Sec: Six conditions})  are proposed, which allow one to derive the distributed-order fractional wave equation \eqref{eq: DFWE}, show existence and uniqueness of solutions to the corresponding Cauchy problem, establish support properties of the fundamental solution, and obtain an integral representation of this solution.

The distributional approach generalizes many concrete fractional constitutive models. However, it has the drawback that for every case, six technical conditions (stated in terms of certain integral transforms of $\phi_{\sigma}$ and $\phi_{\eps}$) have to be verified. A second important issue concerns finding restrictions which ensure that the constitutive law is thermodynamically acceptable. Such a condition was examined in \cite{Atanackovic2003}, but again, the condition is a technical one (stated in terms of the transforms of $\phi_{\sigma}$ and $\phi_{\eps}$).

\bigskip

The aim of this paper is threefold. First, we propose a constitutive equation of the form \eqref{eq: const. equation} where $\phi_{\sigma}$ and $\phi_{\eps}$ are Radon measures instead of general distributions. We propose a single general thermodynamical condition for these constitutive measures, which unifies all previously known thermodynamical restrictions, and which is easy to verify in most applications. We proceed to show that under this thermodynamical condition, the six technical conditions from \cite{KOZ2019} hold.

Next, we discuss smoothness of the fundamental solution. We for example show that for a large subclass of models, one has smoothness (even Gevrey regularity) on the boundary of the forward light-cone.

Finally, we discuss wave velocities. We introduce the concepts of weak initial and weak equilibrium velocity, which are useful notions of ``wave packet speed'' at small and large times, respectively. We compute them in terms of quantities associated to the constitutive measures, and relate them to the material constants of the viscoelastic body.

\bigskip

The paper is organized as follows. In the remainder of this section we provide some mathematical preliminaries and fix notations. In Section \ref{Sec: DoDmeasures} we propose the distributed-order fractional derivative using a positive Radon measure as weight ``function'', and analyze its main properties. Next, we investigate in Section \ref{Sec: TD} distributed-order constitutive equations using the in this way defined distributed-order derivative, i.e.\ with measures as constitutive ``functions". In Section \ref{Sec: existence and uniqueness} we prove that all conditions necessary for the unique existence of solutions to the Cauchy problem for the distributed-order fractional wave equation are satisfied, as well as those conditions providing representation formulas and support properties of the fundamental solution. In Section \ref{Sec: Smooth} we investigate the smoothness of the fundamental solution, and finally in Section \ref{Sec: QA} we discuss some qualitative aspects regarding wave velocities.

\subsection{Some notations and definitions}
For $\Omega\subset\R^{n}$ we denote by $\mathcal{D}(\Omega)$ the space of compactly supported smooth functions on $\Omega$,  by $\mathcal{D}'(\Omega)$ the space of distributions on $\Omega$ and by $\mathcal{E}'(\Omega)$ the space of compactly supported distributions on $\Omega$. The Schwartz space of rapidly decreasing smooth functions is denoted by $\mathcal{S}(\R^{n})$ and its dual space,  the space of tempered distributions, by $\mathcal{S}'(\R^{n})$. Finally, $\mathcal{D}'_{+}(\R) \subset \mathcal{D}'(\R)$ and $\mathcal{S}'_{+}(\R) \subset \mathcal{S}'(\R)$  denote subspaces of distributions supported on $[0,\infty)$, and $\mathcal{S}'(\mathbb{R}\times \mathbb{R}_{+})$ denotes the space of distributions in $\mathcal{S}'(\mathbb{R}^{2})$ vanishing on $\R\times(-\infty, 0)$.

\subsubsection*{Integral transforms} The Fourier transform for an integrable function $\varphi \in L^1(\R)$
 is
\[
	 \F\varphi (\xi)=\hat{\varphi}(\xi)=\int_{-\infty }^{\infty}\varphi (x)\e^{-\I\xi x}\dif x, \qquad \xi \in \mathbb{R},
\]
while for  $u\in \mathcal{S}'(\mathbb{R})$ the Fourier transform is given via $\left\langle \F u,\varphi \right\rangle =\left\langle u,\F\varphi \right\rangle$, $\varphi \in \mathcal{S}(\mathbb{R})$.

 The Laplace transform of $u\in \mathcal{D}'_{+}(\mathbb{R})$ satisfying $\e^{-at}u\in \mathcal{S}'(\mathbb{R})$, for all $a >a_{0}>0$ is given by
\[
 	\ML u(s) =\tilde{u}(s) = \F(\e^{-a t}u)(y), \qquad s=a+\I y, \qquad a>a_{0},
\]
and defines a holomorphic function in the half plane $\Re s>a_{0}$. 
If $u\in\MS'(\R_{+})$, then $\ML u(s) = \langle u(t), \e^{-st}\rangle$. In particular, for 
$u\in L^1(\mathbb{R})$ with $u(t)=0$, for $t<0$, 
the Laplace transform is given by
\[
 \ML u(s)=\int_{0}^{\infty} u(t)\e^{-st}\dif t, \qquad \Re s\ge0.
\]
For a function $F$ holomorphic in the half plane $\Re s> 0$ and satisfying for some $m,k\in \N$ the bound
\begin{equation}\label{LTbounds}
\abs{F(s)} \leq A\frac{(1+\,\abs{s})^{m}}{\abs{\Re s}^{k}}, \quad \Re s> 0,
\end{equation} 
the inverse Laplace transform exists as distribution in $\MS'_{+}(\R)$  and  it is given by
\begin{equation} \label{eq: Laplace inverse formula}
 \ML^{-1}F(t) =\lim_{Y\to\infty}\frac{1}{2\pi\I}\int_{a-\I Y}^{a+\I Y} F(s) \e^{st}\dif s, \qquad t>0, \quad a>0,
\end{equation}
whenever this limit exists.

If $f(x,t) \in \MS'(\R\times\R_{+})$, then the Laplace transform of $f$ with respect to $t$ is the distribution-valued function
\[
	\ML_{t}f: \{s: \Re s>0\} \to \MS'(\R): s \mapsto \bigl(\phi(x) \mapsto \langle f(x,t), \phi(x)\e^{-st}\rangle\bigr).
\]

\subsubsection*{Fractional derivatives}  There is extensive literature in the field of fractional calculus. For a comprehensive overview on the theory we refer to the classical source \cite{SKM}, while we refer to \cite{APSZ-2}  for the specific applications in mechanics, and to \cite{Mainardi} for applications in viscoelasticity. Here, for the convenience of the reader, we give the definitions used in our model and some important formulas. 

The left Riemann-Liouville fractional derivative of order $\alpha\in [0,1)$ is defined for an absolutely continuous function, $f\in AC([0,a])$, on an interval $[0,a]$ with $a>0$ by
\[
 	\fracD{0}{\alpha}{t}f(t) =\frac{1}{\Gamma (1-\alpha)} \od{}{t} \int_{0}^{t} \frac{f(\zeta)}{(t-\zeta)^{\alpha}}\dif\zeta, \quad t\in [0,a],
\]
 where $\Gamma$ denotes the Euler gamma function. 
 
For $u\in  \mathcal{S}_+' $ the (left) Riemann-Liouville fractional derivative of order $\alpha$ is defined via convolution with a member from the family $\{f_{\alpha}\}_{\alpha\in\R}$\footnote{$f_\alpha(t)=H(t)t^{\alpha-1}/\Gamma(\alpha)$, $\alpha>0$, and $f_\alpha(t)= (\frac{d}{dt})^N f_{\alpha+N}(t)$, for $\alpha<0$ with $\alpha+N>0$.} for $\alpha<0$, as described for example in \cite[Section 2]{AOP2009}, where a framework for the analysis of equations with fractional derivatives in the spaces of (tempered) distribution was developed. For $u\in AC([0,a])\subset \MS'_+(\R)$ we have $\fracD{0}{\alpha}{t}u(t) = f_{-\alpha}\ast u (t)$.
 The Laplace transform of  a Riemann-Liouville fractional derivative of $u\in \MS'_+$ is
 \[
 \ML[\fracD{0}{\alpha}{t}u](s) = s^{\alpha}\ML u(s).
\]

In \cite[Proposition 2.1]{AOP2009} it is proved that for fixed $\alpha \in \R $ the mapping $u\mapsto \fracD{0}{\alpha}{t}u: \mathcal{S}_+'\to  \mathcal{S}_+'$ is linear and continuous, for fixed $u\in  \mathcal{S}_+'$  the mapping $\alpha\mapsto \fracD{0}{\alpha}{t}u: \R\to  \mathcal{S}_+'$ is smooth, and the mapping $(\alpha,u)\mapsto \fracD{0}{\alpha}{t}u: \R\times \mathcal{S}_+'\to  \mathcal{S}_+'$ is continuous.
This makes the following definition of distributed-order derivative well-defined. 
\subsection{Distributed-order derivative and distributed-order differential equation}
Let $\phi\in \mathcal{E}'(\R)$ and $u\in\mathcal{S}^{\prime}_+(\mathbb{R})$. The distributed-order fractional derivative of $u$ with weight $\phi$, $D^\phi u$,  is the element of $\mathcal{S}^{\prime}_+(\mathbb{R})$ defined via
\begin{equation}  
\label{DefInt}
\langle D^\phi u, \psi \rangle 	\coloneqq \Big\langle \int_{\supp \phi}\phi(\alpha) \fracD{0}{\alpha}{t} u(t) \dif\alpha, \psi(t) \Big\rangle
						\coloneqq \big\langle \phi(\alpha) , \langle \fracD{0}{\alpha}{t}u(t) , \psi(t) \rangle \big\rangle, \quad \psi\in\mathcal{S} (\mathbb{R}).
\end{equation}
When $\supp\phi\subseteq [c,d]$ one writes $D^{\phi}u(t) = \int_c^d\phi(\alpha){}_0D_t^{\alpha}u(t) \dif\alpha$. 
Special cases are continuous functions $\phi$ of $\alpha$ in $[c,d]$, for which the distributional evaluation $\langle \phi(\alpha), \dotso\rangle$ is ordinary integration,
and linear combinations of Dirac delta distributions $\phi(\alpha)=\sum_{i=0}^{k} c_{i}\delta(\alpha -\alpha_{i})$, $\alpha_{i}\in \mathbb{R}$, $i\in\{0,1,...,k\}$, for which the distributed-order fractional derivative reduces to a finite linear combination of Riemann-Liouville fractional derivatives, i.e., $\sum_{i=0}^{k}c_{i}\,\, {}_0D_t^{\alpha_i}u(t)$, cf. \cite{AOP2009}.

The mapping $u\mapsto D^\phi u$ is linear and continuous from $\mathcal{S}_+'$ to $\mathcal{S}_+'$ and the Laplace transform of distributed-order fractional derivative $D^\phi u$ is calculated as 
\begin{equation}\label{LTdofd}
{\mathcal{L} } [D^\phi u](s) = {\mathcal{L} }\Big(\int_{\supp \phi} \phi(\alpha)\fracD{0}{\alpha}{t}u(t)\dif\alpha \Big)(s) = \tilde{u}(s)\langle \phi(\alpha) , s^{\alpha} \rangle, \qquad \mathop{\rm Re}\nolimits s>0.
\end{equation}

If $\phi$ is an absolutely integrable function with compact support, and $\int_{\supp \phi} \phi(\alpha) s^\alpha \dif\alpha$ is nonzero for $\Re s>0$, then for a given integrable function $f$ it could be shown, for example by the means of the Laplace transform, that the distributed-order fractional differential equation
\begin{equation}\label{DOE}
D^\phi u = f
\end{equation}
has a unique (bounded) absolutely continues solution $u$, cf.\ \cite{DOreview,FM2012,DF2009}. 
\subsection{Six conditions} \label{Sec: Six conditions}
Let the constitutive functions $\phi_{\sigma}$, $\phi_{\eps}$ in the general constitutive equation \eqref{eq: const. equation} be compactly supported distributions with support in $[0,1]$, and set $\Phi_{\sigma}(s) = \langle\phi_{\sigma}(\alpha),s^{\alpha}\rangle$ and $\Phi_{\eps}(s) = \langle\phi_{\eps}(\alpha),s^{\alpha}\rangle$. The existence and uniqueness of solutions for the wave equation \eqref{eq: DFWE}, as well as a representation formula for the fundamental solution and support properties, were established in \cite[Section 3]{KOZ2019} under the following assumptions on the functions $\Phi_{\sigma}$, $\Phi_{\eps}$:
\begin{enumerate}[label=\textbf{(A\arabic*)}]
\item $\ML^{-1}(\Phi_{\eps}(s)/\Phi_{\sigma}(s))$ exists as an element of $\mathcal{S}_{+}'$; \label{A1}
\item $s^{2}\Phi_{\sigma}(s)/\Phi_{\eps}(s) \in \C \setminus (-\infty, 0]$ for all $s\in\C$ with $\Re s>0$; \label{A2}
\item $\ML^{-1}(\Phi_{\sigma}(s)/\Phi_{\eps}(s))$ exists as an element of $\mathcal{S}_{+}'$; \label{A3}
\item $\sqrt{\Phi_{\sigma}(s)/\Phi_{\eps}(s)}$ has at most the two branch points $s=0$ and $s=\infty$; \label{A4}
\item $\lim_{\,\abs{s}\to\infty}\sqrt{\Phi_{\sigma}(s)/\Phi_{\eps}(s)} = k$ for some $k\ge0$, uniformly for $\arg s \in [-\pi,\pi]$; \label{A5}
\item $\lim_{s\to0}\abs{s\sqrt{\Phi_{\sigma}(s)/\Phi_{\eps}(s)}} =0$. \label{A6}
\end{enumerate}

The assumption \ref{A5} stated here is slightly stronger than the corresponding one in \cite{KOZ2019}, where convergence only of the absolute value in the range $\arg s \in (\pi/2,\pi) \cup (-\pi,-\pi/2)$ was asked. We believe however that convergence of the function itself in the larger range is necessary for the proof of Theorem 3.5 of \cite{KOZ2019}.

In this paper, we will restrict the constitutive functions $\phi_{\sigma}$ and $\phi_{\eps}$ to positive Radon measures instead of general distributions, and we will derive a single practical condition (\eqref{eq:TD} below) which implies the above six technical conditions. Hence, the results from \cite{KOZ2019} are valid unconditionally in this setting.

\section{The distributed-order derivative with a measure as weight}\label{Sec: DoDmeasures}

Let $\mu(\alpha)$ be a positive Radon measure supported in $[0,1]$. With $\phi(\alpha) = \mu(\alpha)$, the DOFD \eqref{DefInt} of $u\in \MS'_{+}$ becomes the classical (vector-valued) integral
\begin{equation}\label{eq: DoDmeasures}
	D^{\mu} u(t) = \int_0^1 \fracD{0}{\alpha}{t}u(t) \dif \mu(\alpha). 
\end{equation}

The Laplace transform of the distributed-order fractional derivative given by \eqref{LTdofd} takes the form 
\[
	\ML \left( \int_0^1\fracD{0}{\alpha}{t}u(t) \dif \mu (\alpha) \right)(s) =  \Phi(s) \cdot \ML u(s), \quad \Re s >0,
\]
where we introduced the notation 
\[
	\Phi(s) := \int_{0}^{1}s^{\alpha} \dif \mu(\alpha).
\]

We first turn our attention to the analysis of the function $\Phi$, as it plays an essential role in later sections.

\subsection*{Analysis of $\Phi$}

We begin with a lemma giving bounds for the function $\Phi(s)$. Note that the lower bound implies in particular that $\Phi(s)$ has no zeros on $\C \setminus (-\infty,0]$.
\begin{lemma}
\label{lemma: 1} 
Let $s=R\e^{\I\theta}$ with $R>0$ and $\abs{\theta}<\pi$. Then 
\[
	\cos(\,\abs{\theta}/2)\min(1, R)\mu([0,1]) \le \abs{\Phi(s)} \le \max(1, R)\mu([0,1]).
\]
\end{lemma}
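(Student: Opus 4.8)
The plan is to treat the two inequalities separately, using the polar form $s^{\alpha} = R^{\alpha}\e^{\I\alpha\theta}$ (principal branch) so that $\abs{s^{\alpha}} = R^{\alpha}$, together with the elementary observation that for $\alpha\in[0,1]$ one has $\min(1,R)\le R^{\alpha}\le\max(1,R)$ (according to whether $R\ge1$ or $R<1$).

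For the upper bound I would simply apply the triangle inequality to the defining integral:
\[
	\abs{\Phi(s)} \le \int_0^1 \abs{s^{\alpha}}\dif\mu(\alpha) = \int_0^1 R^{\alpha}\dif\mu(\alpha) \le \max(1,R)\,\mu([0,1]),
\]
which is immediate.

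The lower bound is where the real content lies. Since $\mu$ is a positive (hence real) measure, $\Phi(\bar s) = \overline{\Phi(s)}$, so it suffices to handle $0\le\theta<\pi$. The key idea is to rotate $\Phi(s)$ by the mean phase $\e^{-\I\theta/2}$, which centers the arguments of all the integrand's values about $0$. Writing
\[
	\e^{-\I\theta/2}\Phi(s) = \int_0^1 R^{\alpha}\e^{\I(\alpha-1/2)\theta}\dif\mu(\alpha),
\]
the exponent satisfies $\abs{(\alpha-1/2)\theta}\le\theta/2<\pi/2$ for every $\alpha\in[0,1]$. Taking real parts and bounding each cosine from below by $\cos(\theta/2)$ then gives
\[
	\abs{\Phi(s)} \ge \Re\bigl(\e^{-\I\theta/2}\Phi(s)\bigr) = \int_0^1 R^{\alpha}\cos\bigl((\alpha-1/2)\theta\bigr)\dif\mu(\alpha) \ge \cos(\theta/2)\int_0^1 R^{\alpha}\dif\mu(\alpha),
\]
and a final use of $R^{\alpha}\ge\min(1,R)$ yields the claimed bound $\cos(\abs{\theta}/2)\min(1,R)\mu([0,1])$.

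The only genuine obstacle is spotting the correct rotation: a naive estimate of $\Re\Phi(s)$ (that is, taking $\beta=0$ in $\Re(\e^{-\I\beta}\Phi(s))$) would fail as $\theta\to\pi$, since then the arguments $\alpha\theta$ spread across almost the entire upper half plane and the cosines $\cos(\alpha\theta)$ are not uniformly positive. Centering by $\theta/2$ is precisely what forces every term into the sector $\abs{\arg}\le\theta/2$, producing the uniform factor $\cos(\theta/2)$; everything else is routine.
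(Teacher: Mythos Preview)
Your proof is correct and is essentially identical to the paper's own argument: the upper bound is dismissed as obvious, and for the lower bound the paper factors out $\e^{\I\theta/2}$ (equivalently, your rotation by $\e^{-\I\theta/2}$), takes the real part $\int_0^1 R^{\alpha}\cos(\alpha\theta-\theta/2)\dif\mu(\alpha)$, and bounds it below by $\cos(\theta/2)\min(1,R)\mu([0,1])$.
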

\begin{proof}
The second inequality is obvious. For the first one, write
\[
	\Phi(s) = \e^{\I\theta/2}\biggl(\int_{0}^{1}R^{\alpha}\cos(\alpha\theta-\theta/2)\dif \mu(\alpha) + \I \int_{0}^{1}R^{\alpha}\sin(\alpha\theta-\theta/2)\dif \mu(\alpha)\biggr).
\]
We have that $\abs{\Phi(s)} \ge \int_{0}^{1}R^{\alpha}\cos(\alpha\theta-\theta/2)\dif \mu(\alpha) \ge \cos(\theta/2)\min(1, R)\mu([0,1])$.
\end{proof}

The lemma implies the existence of a unique solution of the equation (\ref{DOE}) for a given $f\in \MS'_{+}$, with operator $D^\phi$ defined by \eqref{eq: DoDmeasures}. Indeed, taking Laplace transforms one obtains 
\[
	\ML u(s) = \frac{\ML f(s)}{\Phi(s)}, \qquad \Re s > 0.
\]
By Lemma \ref{lemma: 1} one can bound the right hand side as in \eqref{LTbounds}, yielding existence of the inverse Laplace transform and a solution.

It will be important to study the asymptotic behavior of $\Phi(s)$ when $s\to \infty$. This behavior depends only on the behavior of $\mu$ near the largest point of its support: 
\[
	M \coloneqq \max\supp \mu.
\]
This point is characterized by the property that for every $\epsilon>0$, $\mu([M-\epsilon,M])>0$ and $\mu((M,M+\epsilon))=0$. We have
\begin{equation}
\label{eq: behavior Phi}
	\Phi(s) = \mu(\{M\}) s^{M}+  o(\,\abs{s}^{M}), \quad s \to \infty.
\end{equation}
Indeed, by dominated convergence, 
\[
	\int_{[0,M)}s^{\alpha-M}\dif \mu(\alpha) \to 0, \quad \mbox{as $s\to\infty$}.
\]
Of course, when $\mu$ has no point mass at $M$, this just reduces to $\Phi(s) = o(\,\abs{s}^{M})$.
One the other hand, one also sees that for every $\epsilon>0$ 
\begin{equation}
\label{eq: lower bound Phi}
	\Phi(s) \gtrsim_{\epsilon} \abs{s}^{M-\epsilon}, \quad s \to \infty.
\end{equation}
This follows from
\[
	\Phi(s) = \int_{[0,M-\epsilon)}s^{\alpha}\dif \mu(\alpha) + \int_{[M-\epsilon, M]}s^{\alpha}\dif \mu(\alpha) = o(\,\abs{s}^{M-\epsilon}) + \int_{[M-\epsilon, M]}s^{\alpha}\dif \mu(\alpha) .
\]
As in the proof of Lemma \ref{lemma: 1}, we can bound the absolute value of the second integral from below by $\abs{s}^{M-\epsilon} \cos(\epsilon\theta/2) \mu([M-\epsilon,M])$. Here the measure of the interval $[M-\epsilon, M]$ is non-zero by definition of the support of a measure.

From the preceding discussion it is clear that two measures $\mu_{1}$ and $\mu_{2}$ with $\max \supp\mu_{1} = \max\supp\mu_{2} = M$ that coincide on a neighborhood $[M-\epsilon,M]$ of $M$, however small, display the same asymptotic behavior for large $s$, in the sense that $\Phi_{1}(s) \sim \Phi_{2}(s)$.

\subsubsection*{Examples} We close the section with some important examples. 

The simplest case is that of a linear combination of point measures. Let $0\le \alpha_{1} < \dotso < \alpha_{n}\le 1$, and suppose $a_{1}, \dotso, a_{n}$ are positive numbers. If $\mu(\alpha) = a_{1}\delta(\alpha-\alpha_{1}) + \dotsb + a_{n}\delta(\alpha-\alpha_{n})$, then
\[
	\Phi(s) = \sum_{i=1}^{n}a_{i}s^{\alpha_{i}}.
\]
Of course, $M=\alpha_{n}$ in this case, and $\Phi(s) \sim a_{n}s^{\alpha_{n}} = \mu(\{M\})s^{M}$ as $s\to\infty$.

A second important case is that of absolutely continuous measures $\dif \mu(\alpha) = f(\alpha)\dif \alpha$ for some $L^{1}$-function $f$. Let us consider for example an exponential law $f(\alpha) = \tau^{\alpha}$ (restricted to the interval $[0,1]$), where $\tau$ is a positive constant.
We then have $M=1$, 
\[
	\Phi(s) = \int_{0}^{1}s^{\alpha}\tau^{\alpha}\dif \alpha = \frac{\tau s - 1}{\log(\tau s)},
\]
and $\Phi(s) \sim \tau s/\log s$.

When the $L^{1}$-function $f$ is sufficiently regular near the maximum of its support, in the sense that it can be approximated by a power function $(M-\alpha)^{\kappa}$, the asymptotic behavior of $\Phi$ can be determined from the following lemma.
\begin{lemma}
Let $M\in (0, 1]$ and $\kappa > -1$. For $\abs{s}\to\infty$,
\[
	\int_{0}^{M}s^{\alpha}(M-\alpha)^{\kappa}\dif \alpha = \Gamma(\kappa+1)\frac{s^{M}}{(\log s)^{\kappa+1}} + O\biggl(\frac{1}{\log\abs{s}}\biggr).
\]
\end{lemma}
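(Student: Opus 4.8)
The plan is to reduce the integral to a Gamma integral by the substitution $u = M-\alpha$. Writing $s^{\alpha} = \e^{\alpha\log s}$ with the principal branch of the logarithm, this yields
\[
	\int_{0}^{M}s^{\alpha}(M-\alpha)^{\kappa}\dif \alpha = s^{M}\int_{0}^{M}\e^{-u\log s}\,u^{\kappa}\dif u.
\]
Since we let $\abs{s}\to\infty$ we may assume $\abs{s}>1$, so that $\Re\log s = \log\abs{s}>0$; hence the integrand decays exponentially along the positive real $u$-axis and the reduction is to an analysis of $\int_{0}^{M}\e^{-u\log s}u^{\kappa}\dif u$ as $\log\abs{s}\to\infty$.

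The main step is to replace the finite integral by the full Laplace--Gamma integral over $[0,\infty)$. For $\Re w>0$ one has the identity $\int_{0}^{\infty}\e^{-uw}u^{\kappa}\dif u = \Gamma(\kappa+1)\,w^{-(\kappa+1)}$, which holds first for real $w>0$ by the definition of $\Gamma$ (the integral converging at $u=0$ because $\kappa>-1$) and then for all $w$ with $\Re w>0$ by analytic continuation, both sides being holomorphic in the right half-plane. Applying this with $w=\log s$ produces exactly the claimed leading term $\Gamma(\kappa+1)\,s^{M}(\log s)^{-(\kappa+1)}$. It then remains to control the tail
\[
	s^{M}\int_{M}^{\infty}\e^{-u\log s}\,u^{\kappa}\dif u.
\]

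To estimate the tail I would pass to absolute values, using $\abs{s^{M}}=\abs{s}^{M}$ and $\abs{\e^{-u\log s}}=\abs{s}^{-u}$, which after the shift $u=M+t$ bounds it by $\int_{0}^{\infty}\e^{-t\log\abs{s}}(M+t)^{\kappa}\dif t$. This is a standard endpoint Laplace estimate: splitting into the cases $\kappa\ge 0$ and $-1<\kappa<0$ and comparing $(M+t)^{\kappa}$ with $M^{\kappa}+t^{\kappa}$, respectively $M^{\kappa}$, shows the integral is $O(1/\log\abs{s})$, the dominant contribution coming from the neighborhood of $t=0$. Because this bound depends only on $\abs{s}$, it is automatically uniform in $\arg s$.

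I expect the only genuine subtleties to be bookkeeping ones: fixing the principal branch so that $\Re\log s>0$ guarantees both the convergence and the validity of the Gamma-integral identity, and carrying out the endpoint estimate uniformly in $\arg s\in[-\pi,\pi]$. Since $M>0$, the leading term has magnitude $\asymp \abs{s}^{M}(\log\abs{s})^{-(\kappa+1)}$, which dominates the $O(1/\log\abs{s})$ error as $\abs{s}\to\infty$, so the displayed formula is indeed a genuine asymptotic expansion rather than a vacuous one.
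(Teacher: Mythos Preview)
Your argument is correct and in fact more streamlined than the paper's. You extend the integral from $[0,M]$ to $[0,\infty)$, invoke the identity $\int_{0}^{\infty}\e^{-uw}u^{\kappa}\dif u = \Gamma(\kappa+1)w^{-(\kappa+1)}$ for $\Re w>0$ (justified by analytic continuation from the real axis), and bound the tail $s^{M}\int_{M}^{\infty}\e^{-u\log s}u^{\kappa}\dif u$ in absolute value by a real Laplace integral that is visibly $O(1/\log\abs{s})$, uniformly in $\arg s$.

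The paper proceeds differently: it keeps the range $[0,M]$ but deforms the contour in the $\alpha$-plane. Writing $\theta_{s}=\arctan(\arg s/\log\abs{s})$, the segment $[0,M]$ is replaced by an arc $\Gamma_{1}$ of the circle $\abs{z-M}=M$ from $0$ to $M-M\e^{-\I\theta_{s}}$, followed by the segment $\Gamma_{2}=[M-M\e^{-\I\theta_{s}},M]$. The arc contributes $O(1/\log\abs{s})$, while on $\Gamma_{2}$ the substitution $z=M-v/\log s$ makes $v$ \emph{real} and produces $\frac{s^{M}}{(\log s)^{\kappa+1}}\int_{0}^{M\abs{\log s}}\e^{-v}v^{\kappa}\dif v$, which is $\Gamma(\kappa+1)s^{M}(\log s)^{-(\kappa+1)}$ up to an exponentially small error.

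Both routes arrive at the same leading term plus $O(1/\log\abs{s})$. Your approach trades the contour rotation for a one-line analytic continuation and a tail bound, which is shorter and makes the uniformity in $\arg s$ automatic. The paper's route avoids invoking analytic continuation but is slightly more involved; its advantage is that the substitution yields a real Gamma-type integral directly, which can be convenient when the integrand is not an exact power. One small remark on your tail estimate: for $\kappa>1$ the inequality $(M+t)^{\kappa}\le M^{\kappa}+t^{\kappa}$ needs a constant factor $2^{\kappa-1}$, but this is harmless inside the $O$-symbol.
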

\begin{proof}
For $s$ with $\abs{s}>1$, set $\theta_{s} = \arg\log s = \arctan(\arg s/\log\abs{s}) \in (-\pi/2,\pi/2)$. Consider the contour $\Gamma_{1}\cup\Gamma_{2}$, where $\Gamma_{1}$ is the arc of the circle with center $M$ and radius $M$ which goes from $0$ to $M-M\e^{-\I\theta_{s}}$, and $\Gamma_{2}$ is the line segment $[M-M\e^{-\I\theta_{s}}, M]$. The integral over $[0,M]$ equals the integral over $\Gamma_{1}\cup\Gamma_{2}$. We have
\begin{align*}
	\int_{\Gamma_{1}}s^{z}(M-z)^{\kappa}\dif z 	&= \int_{0}^{\theta_{s}}\exp\bigl(M(1-\e^{-\I\theta})\log s\bigr)(M\e^{-\I\theta})^{\kappa}M\I\e^{-\I\theta}\dif\theta \\
										&\lesssim M^{\kappa+1}\abs{\theta_{s}}\exp\bigl(M\log\abs{s}(1-\cos\theta_{s})\bigr) \lesssim \frac{1}{\log\abs{s}},
\end{align*} 
since $\abs{\theta_{s}} \lesssim 1/\log\abs{s}$ and $\cos(\theta_{s})=1+O(1/\log^{2}\abs{s})$. For the integral over $\Gamma_{2}$, we use the parametrization $z=M - v/\log s$, with $v\in[0,M\abs{\log s}]$. We get
\begin{align*}
	\int_{\Gamma_{2}}s^{z}(M-z)^{\kappa}\dif z	&= \int_{0}^{M\,\abs{\log s}}s^{M-\frac{v}{\log s}}\biggl(\frac{v}{\log s}\biggr)^{\kappa}\frac{\dif v}{\log s} \\
										&= \frac{s^{M}}{(\log s)^{\kappa+1}}\int_{0}^{M\,\abs{\log s}}\e^{-v}v^{\kappa}\dif v \\
										&= \frac{s^{M}}{(\log s)^{\kappa+1}}\bigl(\Gamma(\kappa+1)+ O((M\abs{\log s})^{\kappa}\e^{-M\,\abs{\log s}})\bigr).
\end{align*}
\end{proof}
Notice also that the result of this lemma is in agreement with the previous observations: $\Phi(s) = o(\,\abs{s}^{M})$ and $\Phi(s) \gtrsim_{\epsilon} \abs{s}^{M-\epsilon}$ for every $\epsilon>0$, i.e. with formulas \eqref{eq: behavior Phi} and \eqref{eq: lower bound Phi}, and the remark in between.

\section{The constitutive law}\label{Sec: TD}
Let us now state the general form of the constitutive equation, linking the stress $\sigma$ and the strain $\eps$ of a viscoelastic material, using positive measures as constitutive ``functions''. We consider two positive Radon measures $\mus$ and $\mue$ supported in $[0,1]$, and propose the constitutive equation in the following form:
\begin{equation}
\label{eq: const. eq. measures}
	\int_{0}^{1}\fracD{0}{\alpha}{t}\sigma(x,t)\dif \mus(\alpha) = \int_{0}^{1}\fracD{0}{\alpha}{t}\eps(x,t)\dif \mue(\alpha).
\end{equation}
It is  clear that for $\phi_{\sigma}=\mus$ and $\phi_{\eps}=\mue$, equation \eqref{eq: const. equation} takes the form \eqref{eq: const. eq. measures}. It can be argued that this setting is more intuitive, since then the integrals in \eqref{eq: const. eq. measures} might be interpreted as (weighted) averages of the fractional derivatives ${}_{0}D_{t}^{\alpha}\sigma$ and ${}_{0}D_{t}^{\alpha}\eps$, while the interpretation of $\langle \phi(\alpha), {}_{0}D_{t}^{\alpha}f\rangle$ where $\phi$ is a general distribution is less clear. 

It is also clear that most concrete examples of viscoelasticity, see e.g.\ \cite{KOZ2019, Mainardi}, are contained in this definition. For example, the Zener constitutive equation corresponds to choice $\mu_{\sigma}(\alpha) = \delta(\alpha)+\tau\delta(\alpha-\alpha_{1})$ and $\mu_{\eps}(\alpha) = \delta(\alpha) + \delta(\alpha-\alpha_{1})$. Also, the so-called power type model, which uses exponential functions for constitutive functions and is proposed in \cite{Atanackovic2002} could be obtained from \eqref{eq: const. equation} for $\dif\mus(\alpha) = a^{\alpha}\dif \alpha$, $\dif\mue(\alpha) = b^{\alpha}\dif \alpha$, where $a$ and $b$ are positive constants. 

The Laplace transform of \eqref{eq: const. eq. measures} reads
\begin{equation}
\label{eq: Laplace transform const. eq. measures}
	\Phi_{\sigma}(s) \ML_{t}\{\sigma(x,t)\}(s)  = \Phi_{\eps}(s)\ML_{t}\{\eps(x,t)\}(s),
\end{equation}
where we have set
\[
	\Phi_{\sigma}(s) \coloneqq \int_{0}^{1}s^{\alpha}\dif\mus(\alpha), \quad \Phi_{\eps}(s) \coloneqq \int_{0}^{1}s^{\alpha}\dif\mue(\alpha).
\] 
These two functions play a vital role in the existence theory for the wave equation modeling wave propagation in viscoelastic media. 
\subsection*{Thermodynamical restriction}
Let us come back to the constitutive equation \eqref{eq: const. eq. measures}.
When modeling physical phenomena, the constitutive equation is subject to certain restrictions coming from energy considerations. These are usually referred to as thermodynamical restrictions. One defines the so-called complex modulus $\hat{E}$ via the relation
\[
	\F_{t}\{\sigma(x,t)\}(\omega) = \hat{E}(\omega)\F_{t}\{\eps(x,t)\}(\omega),
\] 
obtained by taking Fourier transforms in the constitutive equation. Note that $\hat{E}(\omega) = \Phi_{\eps}(\I\omega)/\Phi_{\sigma}(\I\omega)$ is a well-defined function for $\omega>0$. The functions $\Re \hat{E}(\omega)$ and $\Im \hat{E}(\omega)$ are called the storage modulus and loss modulus respectively. They can be related to the elastically stored energy and the dissipated energy respectively for sinusoidal excitations, and hence are required to be non-negative for every $\omega > 0$, see e.g.\ \cite[Chapter 9]{Tschoegl} or \cite[Section 2.7]{Mainardi} (see also \cite{BT1986} for the special case of the fractional Zener model). Hence we require
\[
	\forall \omega>0: \Re\hat{E}(\omega)\ge0, \quad \Im\hat{E}(\omega) \ge 0.
\] 
We have 
\[
	\Re\hat{E}(\omega) = \frac{1}{\abs{\Phi_{\sigma}(\I\omega)}^{2}}\iint_{[0,1]^{2}}\omega^{\alpha+\beta}\cos\Bigl(\frac{\pi}{2}(\alpha-\beta)\Bigr)\dif\mue(\alpha)\dif\mus(\beta),
\]
and 
\begin{align*}
	\Im\hat{E}(\omega)	&= \frac{1}{\abs{\Phi_{\sigma}(\I\omega)}^{2}}\iint_{[0,1]^{2}}\omega^{\alpha+\beta}\sin\Bigl(\frac{\pi}{2}(\alpha-\beta)\Bigr)\dif\mue(\alpha)\dif\mus(\beta)\\
					&= \frac{1}{\abs{\Phi_{\sigma}(\I\omega)}^{2}}\iint_{\Delta}\omega^{\alpha+\beta}\sin\Bigl(\frac{\pi}{2}(\alpha-\beta)\Bigr)\dif\left\{\mue(\alpha)\times\mus(\beta)-\mus(\alpha)\times\mue(\beta)\right\},
\end{align*}
where $\Delta$ is the triangle $\Delta = \{(\alpha,\beta)\in [0,1]: \alpha>\beta\}$.

The condition $\Re\hat{E}(\omega)\ge0$ readily follows from the fact that $\mus$ and $\mue$ are positive measures. A sufficient condition for the non-negativity of $\Im\hat{E}(\omega)$ is that $\mue\times\mus - \mus\times\mue$  is a non-negative measure on the triangle $\Delta$. This condition is not necessary, as can be seen from simple examples such as $\mus(\alpha) = \delta(\alpha-1/5) + \delta(\alpha-3/5)$ and $\mue(\alpha) = \delta(\alpha-2/5) + \delta(\alpha-4/5)$. However, the aforementioned sufficient condition appears quite naturally in the analysis of the existence of solutions of the wave equation. Hence, from now on we assume that the measures $\mus$ and $\mue$ appearing in the constitutive equation \eqref{eq: const. eq. measures} satisfy the thermodynamical restriction \eqref{eq:TD}:
\begin{equation}
\label{eq:TD}
\nues(\alpha, \beta) \coloneqq \mue(\alpha)\times\mus(\beta) - \mus(\alpha)\times\mue(\beta) \text{ is a non-negative measure on } \Delta. \tag{\textbf{T}}
\end{equation}

\subsubsection*{Special cases}Let us rewrite this condition for some specific forms of the measures.
Suppose that 
\begin{equation}
\label{eq: LFM}
	\mus(\alpha) = \sum_{i=1}^{n}a_{i}\delta(\alpha-\alpha_{i}), \quad \mue(\alpha) = \sum_{i=1}^{n}b_{i}\delta(\alpha-\alpha_{i}),
\end{equation}
where $0\le\alpha_{1} < \dotso < \alpha_{n} <1$, $a_{i}$, $b_{i}\ge0$, but not $a_{i}=b_{i}=0$ for some $i$. The property \eqref{eq:TD} then reduces to 
\begin{equation}
\label{eq: thermo LFM}
	\frac{a_{1}}{b_{1}} \ge \frac{a_{2}}{b_{2}} \ge \dotso \frac{a_{n}}{b_{n}} \ge 0,
\end{equation}
with the convention that $a/0=\infty$. This condition unifies the thermodynamical restrictions on the coefficients in the constitutive equations describing four types of viscoelastic materials proposed in the paper \cite{AKOZ}. 

In the case that $\dif\mus(\alpha)=f(\alpha)\dif\alpha$, $\dif\mue(\alpha)=g(\alpha)\dif\alpha$, with $f$, $g\in L^{1}$, the condition reduces to 
\[
	g(\alpha)f(\beta) \ge g(\beta)f(\alpha) \mbox{ a.e.\ on the set } \alpha > \beta.
\] 
for positive $g$, this is equivalent to: $f/g$ is a non-increasing function. In case of the exponential model, i.e. 
\begin{equation}
\label{eq: thermo explaw}
	\dif\mus(\alpha) = a^{\alpha}\dif \alpha, \quad \dif\mue(\alpha) = b^{\alpha}\dif \alpha,
\end{equation}
for $a$ and $b$  positive constants, the thermodynamical restriction  \eqref{eq:TD} becomes\footnote{The case $a=b$ is often excluded, since in that case the constitutive equation reduces to the trivial equation $\sigma=\eps$.} $a\le b$. This is again the proposed thermodynamical restriction in \cite{AKOZ}.\\

To conclude the section we introduce some terminology. By \emph{proper fractional models} we will mean those models for which the support of $\mus$ or $\mue$ intersects the open interval $(0,1)$, i.e.\ those models for which there appears a proper fractional differential operator $\fracD{0}{\alpha}{t}$, $0<\alpha<1$, in the constitutive equation \eqref{eq: const. eq. measures}. The remaining models are referred to as the \emph{classical mechanical models} \cite{Mainardi}, and are given by
\begin{equation}
\label{eq: classical models}
	\mus(\alpha) = a_{0}\delta(\alpha) + a_{1}\delta(\alpha-1), \quad \mue(\alpha) = b_{0}\delta(\alpha) + b_{1}\delta(\alpha-1),
\end{equation} 
where $a_{0}, a_{1}, b_{0}, b_{1}$ are non-negative constants. Bearing in mind the thermodynamical restriction \eqref{eq:TD}, which reduces here to $a_{0}/b_{0} \ge a_{1}/b_{1}$, we discern five cases.
\begin{itemize}
	\item the Hooke model: $a_{0}\sigma=b_{0}\eps$ with $a_{0},b_{0}>0$ (the models $a\od{\sigma}{t}=b\od{\eps}{t}$ and $a(\sigma+c\od{\sigma}{t})=b(\eps+c\od{\eps}{t})$ are also equivalent to the Hooke model);
	\item the Newton model: $a_{0}\sigma=b_{1}\od{\eps}{t}$ with $a_{0},b_{1}>0$;
	\item the Voigt model: $a_{0}\sigma=b_{0}\eps+b_{1}\od{\eps}{t}$ with $a_{0},b_{0},b_{1}>0$; 
	\item the Maxwell model: $a_{0}\sigma+a_{1}\od{\sigma}{t}=b\od{\eps}{t}$ with $a_{0},a_{1},b>0$;
	\item the Zener or Standard Linear Solid (SLS) model: $a_{0}\sigma+a_{1}\od{\sigma}{t}=b_{0}\eps+b_{1}\od{\eps}{t}$ with $a_{0},a_{1},b_{0},b_{1}>0$ and $a_{0}/b_{0} > a_{1}/b_{1}$.
\end{itemize}

\section{Existence and uniqueness of solutions}\label{Sec: existence and uniqueness} 
In this section we discuss existence and uniqueness of solutions for the distributed-order fractional wave equation \eqref{eq: DFWE} obtained from the system of the equilibrium equation \eqref{eq: NL}, the constitutive law \eqref{eq: const. eq. measures}, and the strain measure \eqref{eq: sm}. 
In what follows we will show that the conditions \ref{A1}- \ref{A6} given in Subsection \ref{Sec: Six conditions} hold in the case of measures satisfying \eqref{eq:TD}, implying the existence and uniqueness result, as well as a representation formula and support properties, by the results from \cite[Section 3]{KOZ2019}. 

To start, let us note that from Lemma \ref{lemma: 1} it follows that for $\Re s>0$:
\[
	\frac{\Phi_{\eps}(s)}{\Phi_{\sigma}(s)} \lesssim \frac{\max\{1,\abs{s}\}}{\min\{1,\abs{s}\}} \le \frac{(1+\abs{s})^2}{\Re s}, \quad \text{and}\quad \frac{\Phi_{\sigma}(s)}{\Phi_{\eps}(s)} \lesssim \frac{\max\{1,\abs{s}\}}{\min\{1,\abs{s}\}} \le \frac{(1+\abs{s})^2}{\Re s}.
 \] 
This implies in particular that both $\Phi_{\eps}(s)/\Phi_{\sigma}(s)$ and $\Phi_{\sigma}(s)/\Phi_{\eps}(s)$ are Laplace transforms of tempered distributions in $\MS'_{+}$, thus \ref{A1} and \ref{A3} hold. From assumption \ref{A1} we can derive the wave equation  \eqref{eq: DFWE}. Indeed, by the Laplace inversion formula \eqref{eq: Laplace inverse formula} and denoting 
\begin{equation}\label{eq: L}
	L(t): = \ML^{-1}\biggl(\frac{\Phi_{\eps}(s)}{\Phi_{\sigma}(s)}\biggr)(t).
\end{equation}
 one can solve the constitutive equation \eqref{eq: const. eq. measures} for $\sigma$ as:
\[
	\sigma = L(t)\ast_{t}\eps.
\]
Taking partial derivative with respect to $x$ of both sides, and using equations \eqref{eq: NL} and \eqref{eq: sm} to write $\partial_{x}\sigma$ and $\eps$ in terms of the displacement $u$, we obtain the distributed-order fractional wave equation \eqref{eq: DFWE}, i.e. 
\begin{equation}
\label{eq: DFWE1}
	\dpd[2]{}{t}u(x,t) = \ML^{-1}\biggl(\frac{\Phi_{\eps}(s)}{\Phi_{\sigma}(s)}\biggr)(t)\ast_{t}\dpd[2]{}{x}u(x,t), \qquad x\in\R, \quad t>0.
\end{equation}

Theorem 3.2 of \cite{KOZ2019} states that one has existence and uniqueness of solutions under the assumptions \ref{A2} and \ref{A3}. The unique solution to the Cauchy problem \eqref{eq: DFWE1}, $u(x,0) = u_{0}(x)$ and $\partial_{t}u(x,0)=v_{0}(x)$ is then given by
\[
	u(x,t) = S(x,t) \ast(u_{0}(x) \delta'(t) + v_{0}(x)\delta(t)),
\]
where
\begin{equation}
\label{eq: definition S}
	S(x,t) = \ML^{-1}_{s\to t}\{\tilde{S}(x, s)\}, \quad \tilde{S}(x,s) = \frac{\Psi(s)}{2s}\exp\bigl(-\abs{x}s\Psi(s)\bigr),
\end{equation}
and where, here and throughout the paper, $\Psi$ is defined  as
\begin{equation}\label{Psi}
	\Psi(s) := \sqrt{\frac{\Phi_{\sigma}(s)}{\Phi_{\eps}(s)}}.
\end{equation}
The formula for the fundamental solution $S$ can be found by taking Laplace transforms with respect to $t$ in $\partial_{t}^{2}u - L(t)\ast_{t}\partial_{x}^{2}u = u_{0}(x)\delta'(t) + v_{0}(x)\delta(t)$, and then solving the ODE in $x$.
The assumptions \ref{A2} and \ref{A3} imply that for each $s$ with $\Re s>0$, the found solution $\tilde{u}(x,s) = \tilde{S}(x,s)\ast_{x}(su_{0}(x) + v_{0}(x))$ is tempered in $x$, and that $S$ is well-defined as an element of $\MS'(\R\times\R_{+})$.

We now show that \ref{A2} follows from the thermodynamical restriction \eqref{eq:TD}. Condition \ref{A2} is equivalent to $s^{2} + \xi^{2}\Phi_{\eps}(s)/\Phi_{\sigma}(s) \neq 0$ for $\Re s>0$ and for every $\xi\in\R$. Write $s=R\e^{\I\theta}$ with $\theta\in(-\pi/2,\pi/2)$. If $\theta=0$, then $\Phi_{\eps}(s)$ and $\Phi_{\sigma}(s)$ are real and positive, so that $R^{2} + \xi^{2}\Phi_{\eps}(R)/\Phi_{\sigma}(R) > 0$. Suppose now that $\theta \in (0,\pi/2)$. Then
\[
	\Im\frac{\Phi_{\eps}(s)}{\Phi_{\sigma}(s)} 
		= \frac{1}{\abs{\Phi_{\sigma}(s)}^{2}}\iint_{\Delta}R^{\alpha+\beta}\sin((\alpha-\beta)\theta)\dif\left\{\mue(\alpha)\times\mus(\beta)-\mus(\alpha)\times\mue(\beta)\right\},
\]
By \eqref{eq:TD}, this imaginary part is $\ge0$, so that $\Im\bigl(s^{2} + \xi^{2}\Phi_{\eps}(s)/\Phi_{\sigma}(s)\bigr) > 0$. Similarly one shows that $\Im\bigl(s^{2} + \xi^{2}\Phi_{\eps}(s)/\Phi_{\sigma}(s)\bigr) < 0$ if $\theta \in (-\pi/2,0)$.

Having proven the assumptions \ref{A1}, \ref{A2}, and \ref{A3}, we can conclude that the following theorem holds.
\begin{theorem}
\label{th: existence and uniqueness}
Let $\mus$ and $\mue$ be positive Radon measures on $[0,1]$, satisfying the thermodynamical restriction \eqref{eq:TD}. Then the Cauchy problem for the distributed-order fractional wave equation \eqref{eq: DFWE} with initial conditions
\begin{equation}
\label{eq: Cauchy data}
	u(x,0) = u_{0}(x), \quad \dpd{}{t}u(x,0) = v_{0}(x),
\end{equation}
where $u_{0}, v_{0}\in \MS'(\R)$, has a unique solution $u\in \MS'(\R\times\R_{+})$.
\end{theorem}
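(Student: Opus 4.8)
The plan is to deduce the theorem from Theorem 3.2 of \cite{KOZ2019}, whose conclusion is exactly the unique solvability of the Cauchy problem \eqref{eq: DFWE1}--\eqref{eq: Cauchy data}, by verifying its hypotheses for constitutive measures obeying \eqref{eq:TD}. That cited theorem needs conditions \ref{A2} and \ref{A3}; I would additionally establish \ref{A1} at the outset, since it is what legitimizes rewriting the constitutive law as $\sigma=L\ast_{t}\eps$ with $L$ given by \eqref{eq: L}, and hence casting the problem in the convolution form \eqref{eq: DFWE1}. The conditions \ref{A4}--\ref{A6} are not needed for existence and uniqueness (they enter only for the representation formula and support properties), so the whole proof reduces to checking \ref{A1}, \ref{A3}, and \ref{A2}.

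For \ref{A1} and \ref{A3} I would rely on the single two-sided estimate of Lemma \ref{lemma: 1}, applied to both $\Phi_{\sigma}$ and $\Phi_{\eps}$. Taking the quotient of the upper bound for one and the lower bound for the other yields, for $\Re s>0$,
\[
	\frac{\Phi_{\eps}(s)}{\Phi_{\sigma}(s)} \lesssim \frac{\max\{1,\abs{s}\}}{\min\{1,\abs{s}\}} \le \frac{(1+\abs{s})^{2}}{\Re s},
\]
and the symmetric bound for $\Phi_{\sigma}/\Phi_{\eps}$. Since the lower bound in Lemma \ref{lemma: 1} also guarantees that neither denominator vanishes on the right half-plane, each quotient is holomorphic there and satisfies the growth estimate \eqref{LTbounds} with $m=2$, $k=1$. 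By the inversion formula \eqref{eq: Laplace inverse formula} each therefore has an inverse Laplace transform in $\MS'_{+}$, which is precisely \ref{A1} and \ref{A3}.

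The substantive step is \ref{A2}, and this is where \eqref{eq:TD} is used. I would first recast \ref{A2} as the statement that $s^{2}+\xi^{2}\Phi_{\eps}(s)/\Phi_{\sigma}(s)\neq0$ for every real $\xi$ and every $s=R\e^{\I\theta}$ with $\abs{\theta}<\pi/2$. The case $\theta=0$ is trivial, as $\Phi_{\sigma}$, $\Phi_{\eps}$ are then positive reals. For $\theta\in(0,\pi/2)$ I would compute $\Im\bigl(\Phi_{\eps}(s)/\Phi_{\sigma}(s)\bigr)$; expanding the product $\Phi_{\eps}(s)\overline{\Phi_{\sigma}(s)}$ as a double integral over $[0,1]^{2}$ and antisymmetrizing over the diagonal, the imaginary part becomes an integral of $R^{\alpha+\beta}\sin((\alpha-\beta)\theta)$ against the signed measure $\nues=\mue\times\mus-\mus\times\mue$ on the triangle $\Delta=\{\alpha>\beta\}$. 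On $\Delta$ one has $\sin((\alpha-\beta)\theta)>0$, and $\nues\ge0$ by \eqref{eq:TD}, so this imaginary part is non-negative and consequently $\Im\bigl(s^{2}+\xi^{2}\Phi_{\eps}(s)/\Phi_{\sigma}(s)\bigr)>0$. The range $\theta\in(-\pi/2,0)$ is handled symmetrically with reversed sign, so the expression never meets $(-\infty,0]$, establishing \ref{A2}.

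With \ref{A1}, \ref{A2}, \ref{A3} verified, Theorem 3.2 of \cite{KOZ2019} applies verbatim and delivers the unique $u\in\MS'(\R\times\R_{+})$ obtained by convolving the fundamental solution \eqref{eq: definition S} against $u_{0}\delta'+v_{0}\delta$. I expect the only real obstacle to be the sign bookkeeping in \ref{A2}, namely the passage from the full square $[0,1]^{2}$ to the triangle $\Delta$ so that the positivity hypothesis \eqref{eq:TD} can be brought to bear; everything else follows mechanically from Lemma \ref{lemma: 1}.
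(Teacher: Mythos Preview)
Your proposal is correct and follows essentially the same route as the paper: Lemma~\ref{lemma: 1} yields the growth bound \eqref{LTbounds} for both $\Phi_{\eps}/\Phi_{\sigma}$ and its reciprocal, giving \ref{A1} and \ref{A3}; then \eqref{eq:TD} is invoked to control the sign of $\Im\bigl(\Phi_{\eps}(s)/\Phi_{\sigma}(s)\bigr)$ via the antisymmetrization over $\Delta$, establishing \ref{A2}; and Theorem~3.2 of \cite{KOZ2019} concludes. The paper's argument is organized identically, and your explicit mention of the antisymmetrization step is in fact a small clarification of what the paper writes more tersely.
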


We now turn our attention to the assumptions \ref{A4}, \ref{A5}, and \ref{A6}. These assumptions imply that the fundamental solution is supported in the cone $\abs{x} \le ct$, with $c=1/k$, ($k$ coming from \ref{A5}), and yield a representation of the fundamental solution inside this cone in terms of an absolutely convergent integral.

Recall from the end of the Section \ref{Sec: TD} that by proper fractional models we mean constitutive models described via \eqref{eq: const. eq. measures} for which the support of $\mus$ or $\mue$ intersects the open interval $(0,1)$. 

\begin{lemma}
\label{lem: condition A4}
For proper fractional models satisfying \eqref{eq:TD}, the functions $\Phi_{\sigma}(s)$ and $\Phi_{\eps}(s)$ are non-zero for $\arg s \in [-\pi,\pi]$, $s\neq0$. In particular, the condition \ref{A4} holds.
\end{lemma}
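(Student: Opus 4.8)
The plan is to reduce \ref{A4} to the single statement that $\Phi_{\sigma}$ and $\Phi_{\eps}$ have no zeros on $\C\setminus\{0\}$ (for $\arg s\in[-\pi,\pi]$), and to prove this non-vanishing by separating the open cut plane from the negative real axis. For $\arg s\in(-\pi,\pi)$ nothing new is needed: the lower bound of Lemma \ref{lemma: 1} is strictly positive there, provided the model is genuine in the sense that $\mus([0,1])>0$ and $\mue([0,1])>0$ (otherwise one of the functions vanishes identically and the statement is vacuous). So the whole issue is the negative real axis $s=-R$, $R>0$, which Lemma \ref{lemma: 1} does not reach since its lower bound degenerates as $\abs{\theta}\to\pi$.

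First I would compute, using $s^{\alpha}=R^{\alpha}\e^{\pm\I\pi\alpha}$,
\[
	\Phi(-R)=\int_{[0,1]}R^{\alpha}\cos(\pi\alpha)\dif\mu(\alpha)\pm\I\int_{(0,1)}R^{\alpha}\sin(\pi\alpha)\dif\mu(\alpha),
\]
where the endpoints $\alpha\in\{0,1\}$ disappear from the imaginary part because $\sin(\pi\alpha)=0$ there, while $\sin(\pi\alpha)>0$ on $(0,1)$. Hence, as soon as $\mu((0,1))>0$, the imaginary part is non-zero and $\Phi(-R)\neq0$ for every $R>0$. This already disposes of any measure whose support meets the open interval $(0,1)$.

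The hard part, and the only place where the thermodynamical restriction \eqref{eq:TD} is genuinely used, is the case where one measure is supported in $\{0,1\}$, since there $\Phi(-R)$ is real and can a priori vanish. By the properness assumption at least one measure, say $\mus$ (the case of $\mue$ being symmetric after the evident modification), meets $(0,1)$, so $\Phi_{\sigma}(-R)\neq0$ by the previous step. If $\mue((0,1))>0$ we are done as well; otherwise $\mue=b_{0}\delta_{0}+b_{1}\delta_{1}$, and I would show \eqref{eq:TD} forces $b_{0}=0$, leaving $\Phi_{\eps}(-R)=-b_{1}R\neq0$. To extract this, restrict $\nues$ to $\Delta$: because $\mue$ lives on $\{0,1\}$, on $\Delta$ one gets $\nues=b_{1}\,\delta_{1}(\alpha)\otimes\mus|_{[0,1)}(\beta)-b_{0}\,\mus|_{(0,1]}(\alpha)\otimes\delta_{0}(\beta)$, two pieces sitting on the disjoint lines $\alpha=1$ and $\beta=0$. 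Choosing $\alpha^{*}\in\supp\mus\cap(0,1)$ and a small box $B=(\alpha^{*}-\eta,\alpha^{*}+\eta)\times[0,\eta)\subset\Delta$ that avoids $\alpha=1$, only the negative piece contributes, so $\nues(B)=-b_{0}\,\mus((\alpha^{*}-\eta,\alpha^{*}+\eta))$, which is strictly negative when $b_{0}>0$ since $\alpha^{*}\in\supp\mus$. Non-negativity of $\nues$ then forces $b_{0}=0$. The symmetric case ($\mue$ meets $(0,1)$, $\mus=a_{0}\delta_{0}+a_{1}\delta_{1}$) is handled by localizing near the corner $(1,\beta^{*})$ with $\beta^{*}\in\supp\mue\cap(0,1)$, which forces $a_{1}=0$ and hence $\Phi_{\sigma}(-R)=a_{0}\neq0$; this asymmetry between the two cases reflects that \eqref{eq:TD} is not invariant under exchanging $\mus$ and $\mue$.

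Finally, for \ref{A4}: on the cut plane $\Phi_{\sigma}$ and $\Phi_{\eps}$ are holomorphic and, the branch points of $s^{\alpha}$ being only $0$ and $\infty$, so are the only possible branch points of each $\Phi$. Having shown $\Phi_{\sigma},\Phi_{\eps}\neq0$ on $\C\setminus\{0\}$, the ratio $\Phi_{\sigma}/\Phi_{\eps}$ is holomorphic and zero-free, so $\Psi=\sqrt{\Phi_{\sigma}/\Phi_{\eps}}$ acquires no additional finite branch point and inherits only $s=0$ and $s=\infty$. I expect the corner-localization argument for \eqref{eq:TD} in the third paragraph to be the main obstacle; the rest follows from Lemma \ref{lemma: 1} and the elementary evaluation of $\Im\Phi(-R)$.
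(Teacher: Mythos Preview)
Your proof is correct and follows essentially the same approach as the paper: both use Lemma~\ref{lemma: 1} for $\abs{\arg s}<\pi$, both observe that $\Im\Phi(-R)=\int_{(0,1)}R^{\alpha}\sin(\pi\alpha)\dif\mu(\alpha)$ vanishes only when $\mu((0,1))=0$, and both invoke \eqref{eq:TD} on suitable rectangles in $\Delta$ to dispose of the remaining two-point case. The organization differs slightly: the paper argues by contradiction (a zero of $\Phi_{\sigma}$ forces $\mus=a_{0}\delta_{0}+a_{1}\delta_{1}$ with $a_{0},a_{1}>0$, and then \eqref{eq:TD} on $\{1\}\times(0,1)$ gives $\mue((0,1))=0$, landing in the classical case), whereas you argue directly (properness gives one measure meeting $(0,1)$, and if the other is two-point then \eqref{eq:TD} on a small box collapses it to a single atom, so its $\Phi$ is a monomial and never vanishes); these are contrapositives of one another, and your localized boxes could equally well be the full strips $\{1\}\times(0,1)$ and $(0,1)\times\{0\}$ used in the paper.
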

\begin{proof}
By Lemma \ref{lemma: 1}, we see that $\Phi_{\sigma}(s)$ and $\Phi_{\eps}(s)$ have no zeros in the range $\arg s \in (-\pi,\pi)$, $s\neq0$. Suppose now that $s=R\e^{\I\pi}$ with $R>0$ is a zero of $\Phi_{\sigma}$. Then
\[
	0 = \Im\Phi_{\sigma}(s) = \Im\biggl( \mus(\{0\}) + \int_{(0,1)}s^{\alpha}\dif\mus(\alpha) - R \mus(\{1\})\biggr) = \int_{(0,1)}R^{\alpha}\sin(\alpha\pi)\dif\mus(\alpha),
\]
 and so $\mus((0,1))=0$, since $\sin(\alpha\pi)>0$ on this interval. In a similar fashion we conclude that $\mus((0,1))=0$ if $\Phi_{\sigma}$ has a zero of the form $R\e^{-\I\pi}$. Hence, the occurrence of a zero would imply that $\mus$ is of the form $a_{0}\delta(\alpha) + a_{1}\delta(\alpha-1)$ with positive $a_{0}, a_{1}$. But then \eqref{eq:TD} implies that we are in the case \eqref{eq: classical models}. Indeed, applying \eqref{eq:TD} to the set $\{1\}\times(0,1) \subseteq \Delta$ yields
\[
 	\nues(\{1\}\times(0,1)) = -a_{1}\mue((0,1)) \ge 0,
\]
which is only possible if $\mue((0,1))=0$.

If $\Phi_{\eps}$ has a zero of the form $s = R\e^{\pm\I\pi}$, $R>0$, then we can conclude in a similar fashion that $\mue(\alpha) = b_{0}\delta(\alpha) + b_{1}\delta(\alpha-1)$ for positive $b_{0},b_{1}$. Applying \eqref{eq:TD} to the set $(0,1)\times\{0\}\subseteq\Delta$, we see that $\mus((0,1))=0$, so that we are again in the case \eqref{eq: classical models}.
\end{proof}
For the classical mechanical models, both the Hooke and the Newton model satisfy \ref{A4}, but the other models have different branch points:  $-b_{0}/b_{1}$ and $\infty$ in the Voigt model, $0$ and $-a_{0}/a_{1}$ in the Maxwell model, and $-a_{0}/a_{1}$ and $-b_{0}/b_{1}$ in the Zener model. The previous lemma shows that these are the only models where such different branch points occur. However, the conclusion of Theorem \ref{th: support S} below can be readily adapted to these models. 

\bigskip

We now show \ref{A5} for all models satisfying \eqref{eq:TD} (including the models \eqref{eq: classical models}). We start with an observation concerning the maximal points of the support of $\mus$ and $\mue$: 
\[
	\max\supp \mus \eqqcolon M_{\sigma} \le M_{\eps} \coloneqq \max\supp\mue.
\]
Indeed, suppose to the contrary that $M_{\sigma} > M_{\eps}$. Let $\epsilon>0$ be so small that $M_{\sigma}-\epsilon>M_{\eps}$. Applying \eqref{eq:TD} to the set $[M_{\sigma}-\epsilon,M_{\sigma}] \times [0, M_{\eps}] \subseteq \Delta$ gives
\[
	0 \le \nues([M_{\sigma}-\epsilon,M_{\sigma}] \times [0, M_{\eps}]) = -\mus([M_{\sigma}-\epsilon,M_{\sigma}])\mue([0, M_{\eps}]) < 0,
\]
a contradiction.

If $M_{\eps} > M_{\sigma}$, then \ref{A5} follows with $k=0$. Indeed, by \eqref{eq: behavior Phi} and \eqref{eq: lower bound Phi} we have that $\Phi_{\sigma}(s) \lesssim \abs{s}^{M_{\sigma}}$, while $\Phi_{\eps}(s) \gtrsim \abs{s}^{M_{\eps}-\delta}$, for every $\delta>0$.

If $M_{\eps} = M_{\sigma} = M$, there are two possibilities. Either $\mus(\{M\})>0$ and $\mue(\{M\})>0$, or $\mus(\{M\}) = 0$. The case $\mus(\{M\})>0$ and $\mue(\{M\}) = 0$ again leads to a contradiction by considering the set $V = \{M\} \times [0,M)$ for which one would have $\nues(V)<0$. In the case that $\mus(\{M\})>0$ and $\mue(\{M\})>0$, \ref{A5} follows easily, since by \eqref{eq: behavior Phi} we have 
\[
	\frac{\Phi_{\sigma}(s)}{\Phi_{\eps}(s)} = \frac{\mus(\{M\})s^{M}+o(\,\abs{s}^{M})}{\mue(\{M\})s^{M}+o(\,\abs{s}^{M})} = \frac{\mus(\{M\})}{\mue(\{M\})} + o(1), \quad \text{as } s\to\infty.
\]
Hence, in this case \ref{A5} holds with $k=\sqrt{\mus(\{M\})/\mue(\{M\})}$. The case $\mus(\{M\})=0$ is more subtle. We need the following function associated to the measures $\mus$ and $\mue$:
\begin{equation}
\label{eq: def F}
	F(x) \coloneqq \frac{\mus([x, M])}{\mue([x,M])}, \quad x\in [0,M].
\end{equation}
The function $F$ is non-negative and non-increasing. Indeed, let $0\le y<x\le M$. Applying \eqref{eq:TD} to the set $[x,M]\times[y,x)$ yields
\[
	\mue([x,M])\mus([y,x)) \ge \mus([x,M])\mue([y,x)).
\]
Writing $\mus([y,x)) = \mus([y,M])-\mus([x,M])$, $\mue([y,x)) = \mue([y,M])-\mue([x,M])$, we get that 
\[
	\mue([x,M])\mus([y,M]) \ge \mus([x,M])\mue([y,M]),
\]
so $F(y)\ge F(x)$. Since $F$ is non-increasing, it has a limit:
\[
	\tau \coloneqq \lim_{x\to M}F(x) \ge 0.
\]
\begin{lemma}
Suppose that $M_{\sigma}=M_{\eps} = M$ and that $\mus(\{M\})=\mue(\{M\})=0$. For every $\epsilon>0$ there exists a $\delta>0$ such that for every Borel set $A\subseteq [M-\delta, M]$, 
\[
	\tau\mue(A) \le \mus(A) \le (\tau+\epsilon)\mue(A).
\]
\end{lemma}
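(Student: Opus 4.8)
The plan is to prove the two one-sided inequalities $\tau\mue(A)\le\mus(A)$ and $\mus(A)\le(\tau+\epsilon)\mue(A)$ separately, and to reduce each to the case where $A$ is a half-open interval $[a,b)$. This reduction is legitimate because $\mus$ and $\mue$ are finite on the compact $[0,1]$, so each of $\mus-\tau\mue$ and $(\tau+\epsilon)\mue-\mus$ is a finite signed Borel measure on $[M-\delta,M]$; such a measure is non-negative as soon as it assigns a non-negative value to every subinterval $[a,b)$, as one sees from its distribution function $x\mapsto\nu([M-\delta,x))$, which is then non-decreasing and generates $\nu$ as a non-negative Lebesgue--Stieltjes measure. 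Since $\mus(\{M\})=\mue(\{M\})=0$, the endpoint $M$ plays no role.

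The lower bound is the easy half and in fact holds globally. For $M-\delta\le a<b\le M$, applying \eqref{eq:TD} to the rectangle $[b,M]\times[a,b)\subseteq\Delta$ gives $\mue([b,M])\mus([a,b))\ge\mus([b,M])\mue([a,b))$, whence $\mus([a,b))\ge F(b)\mue([a,b))\ge\tau\mue([a,b))$, using $F(b)\ge\tau$ and $\mue([b,M])>0$. (When $\mue([a,b))=0$ the inequality is trivial, and the case $b=M$ follows on letting $b\uparrow M$.)

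The upper bound is the subtle part and needs two inputs. The first, and main obstacle, is to show that $\Lambda(\delta):=\sup\{\mus([a,b))/\mue([a,b)):[a,b)\subseteq[M-\delta,M),\ \mue([a,b))>0\}$ tends to $\tau$ as $\delta\to0^{+}$. Here I cannot merely invoke $F(a),F(b)\in[\tau,\tau+\epsilon]$: writing the ratio on $[a,b)$ as a quotient of differences of $F\mue$-values shows it can a priori exceed $\tau+\epsilon$ by an unbounded factor, so closeness of $F$ to $\tau$ near $M$ is not by itself enough. Instead I will argue by contradiction using the monotonicity of ratios forced by \eqref{eq:TD}: if $L:=\lim_{\delta\to0}\Lambda(\delta)>\tau$, then for each small $\delta$ there is $[a_{\delta},b_{\delta})\subseteq[M-\delta,M)$ with ratio exceeding some $\tau+\eta$, $\eta>0$; for a fixed $\delta_{0}$ and $\delta$ small the interval $[M-\delta_{0},a_{\delta})$ lies to the left of $[a_{\delta},b_{\delta})$, so by \eqref{eq:TD} its ratio is $\ge\tau+\eta$ as well, and letting $a_{\delta}\uparrow M$ along a subsequence the left-hand ratio converges to $F(M-\delta_{0})$. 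This forces $F(M-\delta_{0})\ge\tau+\eta$ for every $\delta_{0}$, contradicting $F(M-\delta_{0})\to\tau$.

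The second input rules out ``bad'' intervals near $M$ on which $\mue$ vanishes while $\mus$ does not. If $[a,b)\subseteq[M-\delta,M)$ has $\mue([a,b))=0$ but $\mus([a,b))>0$, then applying \eqref{eq:TD} to $[a,b)\times[y,a)$ for every $y<a$ forces $\mue([y,a))=0$, hence $\mue([0,M-\delta))=0$; were such intervals to exist for arbitrarily small $\delta$, we would get $\mue([0,M))=0$ and thus, with $\mue(\{M\})=0$, the contradiction $\mue\equiv0$. Hence there is $\delta_{0}>0$ with no such bad interval in $[M-\delta_{0},M)$. Choosing $\delta\le\delta_{0}$ small enough that also $\Lambda(\delta)\le\tau+\epsilon$, every subinterval $[a,b)\subseteq[M-\delta,M)$ satisfies $\mus([a,b))\le(\tau+\epsilon)\mue([a,b))$ (via $\Lambda$ when $\mue([a,b))>0$, trivially otherwise), and combining with the global lower bound yields the claim for this $\delta$.
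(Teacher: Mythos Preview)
Your proof is correct, but the route to the upper bound differs from the paper's.

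For the reduction from Borel sets to half-open intervals, your signed-measure/distribution-function argument is cleaner than the paper's, which proceeds stepwise from half-open intervals to open intervals, to open sets, and finally to arbitrary Borel sets via outer regularity of the Radon measures.

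For the lower bound, both proofs use the same application of \eqref{eq:TD} to $[b,M]\times[a,b)$.

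For the upper bound, the paper is more direct. It fixes a reference point $z<M$ with $F(z)\le\tau+\epsilon/2$, introduces $G_{z}(y)=\mus([z,y))/\mue([z,y))$, and applies \eqref{eq:TD} to $[y,x)\times[z,y)$ to obtain $\mus([y,x))\le G_{z}(y)\,\mue([y,x))$ for $y$ close enough to $M$ that $G_{z}(y)\le F(z)+\epsilon/2\le\tau+\epsilon$. This single inequality already covers the case $\mue([y,x))=0$ (since $\mue([z,y))>0$ forces $\mus([y,x))=0$ there), so no separate ``bad interval'' discussion is needed. Your route via the supremum $\Lambda(\delta)$ and a contradiction argument is valid, and the monotonicity-of-ratios step (applying \eqref{eq:TD} to $[a_{\delta},b_{\delta})\times[M-\delta_{0},a_{\delta})$ and letting $a_{\delta}\to M$) is essentially the same use of \eqref{eq:TD} as in the paper, only read in the contrapositive; but it costs you the extra second input ruling out $\mue$-null, $\mus$-positive intervals. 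Two small points you left implicit are harmless: $\mue([M-\delta_{0},a_{\delta}))>0$ for $\delta$ small (since $\mue([M-\delta_{0},M))>0$ and $a_{\delta}\to M$), and the convergence of $\mus([M-\delta_{0},a_{\delta}))/\mue([M-\delta_{0},a_{\delta}))$ to $F(M-\delta_{0})$ uses $\mus(\{M\})=\mue(\{M\})=0$ together with $\mue([a_{\delta},M])\to0$.
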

\begin{proof}
First note that $M>0$ since we assume that the measures $\mus$, $\mue$ are non-zero. Given $\epsilon>0$, let $z$ be such that $x\ge z \implies F(x) \le \tau+ \epsilon/2$, and consider the function
\[
	G_{z}(x) \coloneqq \frac{\mus([z,x))}{\mue([z,x))} = \frac{\mus([z,M]) - \mus([x,M])}{\mue([z,M]) - \mue([x,M])}.
\]
This function is well defined for $x$ sufficiently close to $M$, since $\mue([z,M])>0$ and 
$$\lim_{x\to M}\mue([x,M])=\mue(\{M\})=0$$
 by continuity of the measure $\mue$. Also $\lim_{x\to M}\mus([x,M]) = \mus(\{M\})=0$, so that 
\[
	\lim_{x\to M}G_{z}(x) = \frac{\mus([z,M])}{\mue([z,M])} = F(z).
\]
Let now $\delta_{1}>0$ be such that $G_{z}(x) \le F(z) + \epsilon/2$, for $x\ge M-\delta_{1}$. 

Suppose first that $A$ is of the form $[y,x)$ with $M-\delta_{1}\le y < x \le M$. The inequality $\tau \mue([y,x)) \le\mus([y,x))$ follows from $\mue([x,M])\mus([y,x)) \ge \mus([x,M])\mue([y,x))$ (apply \eqref{eq:TD} to $[x,M]\times[y,x) \subseteq \Delta$) and the fact that $F$ is non-increasing. For this inequality it is actually not required that $y\ge M-\delta_{1}$. For the second inequality we apply \eqref{eq:TD} to $[y,x)\times[z,y) \subseteq \Delta$ to get
\begin{align*}
	\mus([y,x))	&\le \frac{\mus([z,y))}{\mue([z,y))}\mue([y,x)) = G_{z}(y)\mue([y,x)) \\
				&\le (F(z) + \epsilon/2)\mue([y,x)) \le (\tau+\epsilon)\mue([y,x)). 
\end{align*}

suppose now that $A\subseteq (M-\delta_{1},M]$ is an arbitrary Borel set. Since $\mus(\{M\})=\mue(\{M\})=0$ we may actually assume that $A \subseteq (M-\delta_{1}, M)$. If $A$ is the open interval $(x,y)$, then the conclusion of the lemma follows upon writing $(x,y)=\cup_{n}[x+1/n, y)$ and using the continuity property of measures. If $A$ is an arbitrary open set, then $A$ is a countable disjoint union of open intervals, and the conclusion holds by additivity of the measures. The general case finally follows from the fact that $\mus$ and $\mue$ are outer-regular (they are Radon measures). Hence $\mus(A) = \inf\{\mus(G): G \supseteq A, G \mbox{ open}\}$, and similarly for $\mue$.

We conclude that the desired inequalities hold for every Borel set $A\subseteq (M-\delta_{1}, M]$, so selecting any $\delta<\delta_{1}$, the inequalities also hold for every Borel set $A\subseteq[M-\delta, M]$, and the lemma is proven.
\end{proof}

We are now able to the following prove proposition yielding \ref{A5}.
\begin{proposition}
\label{prop: existence tau}
There exists a number $\tau \ge 0$ so that, uniformly for $\theta\in[-\pi,\pi]$, 
\[
	\frac{\Phi_{\sigma}(R\e^{\I\theta})}{\Phi_{\eps}(R\e^{\I\theta})} \to \tau, \quad \text{ as }R\to\infty.
\]
\end{proposition}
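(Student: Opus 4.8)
The plan is to reduce to the one genuinely delicate configuration and then run a comparison-of-measures argument. The cases preceding this proposition already settle the limit: when $M_{\sigma} < M_{\eps}$ the asymptotics \eqref{eq: behavior Phi}--\eqref{eq: lower bound Phi} force the ratio to $0$, so $\tau = 0$; and when $M_{\sigma} = M_{\eps} = M$ with $\mus(\{M\}), \mue(\{M\}) > 0$, formula \eqref{eq: behavior Phi} gives $\tau = \mus(\{M\})/\mue(\{M\})$. In both, the controlling bounds are independent of $\arg s$, so the convergence is already uniform in $\theta$. Thus I would only need to treat $M_{\sigma} = M_{\eps} = M$ with $\mus(\{M\}) = \mue(\{M\}) = 0$, taking $\tau = \lim_{x\to M} F(x)$ from \eqref{eq: def F}. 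The goal is to show $\abs{\Phi_\sigma(s) - \tau\Phi_\eps(s)} / \abs{\Phi_\eps(s)} \to 0$ uniformly in $\theta = \arg s$.

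Given $\epsilon > 0$, I would fix $\delta \in (0, \min(M,1/2))$ from the lemma preceding this proposition, so that $\nu := (\mus - \tau\mue)|_{[M-\delta,M]}$ is a non-negative measure satisfying $\nu(A) \le \epsilon\mue(A)$. Splitting both transforms at $M - \delta$: on $[0, M-\delta)$ the two ``bulk'' contributions are each $O(R^{M-\delta})$ for $R \ge 1$, since there $\abs{s^\alpha} = R^\alpha \le R^{M-\delta}$; on $[M-\delta, M]$ the difference equals $\int_{[M-\delta,M]} s^\alpha \dif\nu$, whose modulus is at most $\int_{[M-\delta,M]} R^\alpha \dif\nu \le \epsilon \int_{[M-\delta,M]} R^\alpha \dif\mue =: \epsilon\, P(R)$, using $\nu \le \epsilon\mue$. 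Altogether $\abs{\Phi_\sigma(s) - \tau\Phi_\eps(s)} \le \epsilon\, P(R) + O(R^{M-\delta})$, uniformly in $\theta$.

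It then remains to bound $\abs{\Phi_\eps(s)}$ from below by a uniform multiple of $P(R)$, and this is the main obstacle: Lemma \ref{lemma: 1} is useless here because its lower constant $\cos(\abs{\theta}/2)$ vanishes precisely at the endpoints $\theta = \pm\pi$ that we must include. The remedy is that only a short interval near $M$ matters. Writing $\int_{[M-\delta,M]} s^\alpha \dif\mue = \e^{\I(M-\delta/2)\theta}\int_{[M-\delta,M]} R^\alpha \e^{\I(\alpha - M + \delta/2)\theta}\dif\mue$ and taking real parts, the phase $(\alpha - M + \delta/2)\theta$ stays in $[-\delta\pi/2, \delta\pi/2]$, so its cosine is $\ge \cos(\delta\pi/2) > 0$ for $\delta < 1$; hence this tail integral has modulus $\ge \cos(\delta\pi/2)\, P(R)$ uniformly in $\theta$. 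Since $P(R) \ge \mue([M-\delta/2,M])R^{M-\delta/2}$ dominates the $O(R^{M-\delta})$ bulk term of $\Phi_\eps$ as $R \to \infty$, this yields $\abs{\Phi_\eps(s)} \ge \tfrac12\cos(\delta\pi/2)P(R)$ for all large $R$, uniformly in $\theta$.

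Combining the two estimates gives $\abs{\Phi_\sigma(s)-\tau\Phi_\eps(s)}/\abs{\Phi_\eps(s)} \le 2\epsilon/\cos(\delta\pi/2) + o(1)$, and with $\delta \le 1/2$ the prefactor is $\le 2\sqrt2\,\epsilon$. Letting first $R\to\infty$ and then $\epsilon\to0$ delivers the uniform limit $\Phi_\sigma/\Phi_\eps \to \tau$, which is \ref{A5} with $k = \sqrt\tau$. The one point I would be careful to record is that $\delta$ may always be shrunk (the lemma's inequalities persist on smaller subintervals of $[M-\delta,M]$), so one can keep $\cos(\delta\pi/2)$ bounded away from $0$ while sending $\epsilon \to 0$.
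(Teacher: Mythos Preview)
Your proof is correct and follows the same overall strategy as the paper: reduce to the case $M_\sigma=M_\eps=M$ with $\mus(\{M\})=\mue(\{M\})=0$, and exploit the preceding lemma to compare $\mus$ with $\tau\mue$ on a short interval $[M-\delta,M]$. The execution differs in one useful way. The paper bounds $\bigl|\int_{[M-\delta,M]} s^\alpha\,\dif(\mus-\tau\mue)\bigr|$ against the \emph{complex} tail integral $\bigl|\int_{[M-\delta,M]} s^\alpha\,\dif\mue\bigr|$, which forces a case analysis on $M\theta$ (four angular ranges) so that a suitably rotated $s^\alpha$ has real and imaginary parts of fixed sign on $[M-\delta,M]$. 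You instead apply the crude triangle inequality $\bigl|\int s^\alpha\,\dif\nu\bigr| \le \int R^\alpha\,\dif\nu \le \epsilon P(R)$, which is immediate since $\nu\ge 0$, and then compensate with a separate uniform lower bound $\abs{\Phi_\eps(s)} \gtrsim P(R)$; but that lower bound is precisely the phase-centering trick already used to prove \eqref{eq: lower bound Phi}, so nothing new is needed. Your route is a little more transparent and avoids the angular case split; the paper's route keeps everything in terms of the single complex quantity $\int_{[M-\delta,M]} s^\alpha\,\dif\mue$ and gives the slightly sharper constant $2\epsilon$ in place of your $2\sqrt{2}\,\epsilon$, which is immaterial for the conclusion.
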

\begin{proof}
We may assume that $M_{\sigma}=M_{\eps}=M$, since in the case $M_{\sigma}<M_{\eps}$ the proposition holds with $\tau=0$. We may also assume that $\mus(\{M\})=\mue(\{M\})=0$. If both of these numbers are positive, then the proposition holds with $\tau=\mus(\{M\})/\mue(\{M\})$, and if $\mus(\{M\})=0$, $\mue(\{M\})>0$, it holds with $\tau=0$. As we remarked before, the case $\mus(\{M\})>0$, $\mue(\{M\})=0$ is not allowed by \eqref{eq:TD}.

Let now $\epsilon>0$ be arbitrary, and select a corresponding $\delta>0$ according to the previous lemma. If $f$ is a real-valued function which is of fixed sign on $[M-\delta, M]$, then the lemma implies that
\[
	\abs{\int_{[M-\delta,M]}f(\alpha)\dif\left\{\mus(\alpha)-\tau\mue(\alpha)\right\}} \le \epsilon \abs{\int_{[M-\delta,M]}f(\alpha)\dif\mue(\alpha)}.
\]
From this it follows that 
\[
	\abs{\int_{[M-\delta,M]}s^{\alpha}\dif\left\{\mus(\alpha)-\tau\mue(\alpha)\right\}} \le 2\epsilon \abs{\int_{[M-\delta,M]}s^{\alpha}\dif\mue(\alpha)}.
\]
Indeed, if $\abs{M\theta} \le \pi/2$ or $3\pi/4 \le \abs{M\theta} \le \pi$, then $\cos(\alpha\theta)$ and $\sin(\alpha\theta)$ are of fixed sign on the interval $\alpha\in [M-\delta, M]$ (provided that $\delta<1/4$ say, which we may assume). Hence the claim follows by splitting the integrand $s^{\alpha}$ in real and imaginary parts. If $M\theta\in(\pi/2,3\pi/4)$, write $s^{\alpha} = \e^{\I\pi/4}\bigl(\cos(\alpha\theta-\pi/4) + \I\sin(\alpha\theta-\pi/4)\bigr)$. The functions $\cos(\alpha\theta-\pi/4)$ and $\sin(\alpha\theta-\pi/4)$ are of fixed sign in the interval $\alpha\in[M-\delta, M]$, so we again reach the same conclusion. Similarly if $M\theta\in(-3\pi/4,-\pi/2)$. Hence we get
\begin{align*}
\Phi_{\sigma}(s)		&= \int_{[M-\delta,M]}s^{\alpha}\dif\mus(\alpha) + o(\,\abs{s}^{M-\delta}) \\
					&=\tau\int_{[M-\delta,M]}s^{\alpha}\dif\mue(\alpha) +  \int_{[M-\delta,M]}s^{\alpha}\dif\left\{\mus(\alpha)-\tau\mue(\alpha)\right\} 
						+ o(\,\abs{s}^{M-\delta}) \\
					&=\tau\int_{[M-\delta,M]}s^{\alpha}\dif\mue(\alpha) + \zeta \abs{\int_{[M-\delta,M]}s^{\alpha}\dif\mue(\alpha)} + o(\,\abs{s}^{M-\delta}), 
\end{align*}
where $\zeta$ is some complex number with $\abs{\zeta}\le 2\epsilon$. Now since $\Phi_{\eps}(s) \gtrsim \abs{s}^{M-\delta}$ and $\Phi_{\eps}(s) = \int_{[M-\delta,M]}s^{\alpha}\dif\mue(\alpha) + o(\,\abs{s}^{M-\delta})$, we have
\[
	\frac{\Phi_{\sigma}(s)}{\Phi_{\eps}(s)} = \frac{\tau+\zeta'+o(1)}{1+o(1)},
\]
with $\abs{\zeta'} \le 2\epsilon$. 
From this it follows that 
\[
	\limsup_{\abs{s}\to\infty}\abs{\frac{\Phi_{\sigma}(s)}{\Phi_{\eps}(s)}-\tau} \le 2\epsilon.
\]
But $\epsilon>0$ was arbitrary, so $\lim_{\,\abs{s}\to\infty}\Phi_{\sigma}(s)/\Phi_{\eps}(s) = \tau$.
\end{proof}

Finally, we show \ref{A6}, which is easy. In fact, we have the stronger estimate 
\begin{equation}
\label{eq: bound near 0}
	\Psi(s) \lesssim \frac{1}{\sqrt{\,\abs{s}}}, \quad \text{as $s\to0$},
\end{equation}
where $\Psi$ is as in \eqref{Psi}. For $\theta=\arg s \in [-3\pi/4, 3\pi/4]$ say, this follows from Lemma \ref{lemma: 1}. For values of $\theta$ close to $\pm\pi$, we need a slightly refined lower bound for $\Phi_{\eps}(r\e^{\I\theta})$ for small $r$. First of all we may assume that $\mue([0,x])>0$ for some $x < M_{\eps}$. If this is not the case, then $\mue(\alpha)=a\delta(\alpha-M_{\eps})$ for some $a>0$, and $\Phi_{\eps}(r\e^{\I\theta}) = ar^{M_{\eps}}\e^{\I M_{\eps}\theta} \gtrsim r^{M_{\eps}}$ as $r\to0$. If $\mue([0,x])>0$, then like in the proof of Lemma \ref{lemma: 1}, we have
\begin{align*}
	\abs{\Phi_{\eps}(r\e^{\I\theta})} 	&= \abs{\e^{\I\theta x/2}\int_{0}^{1}r^{\alpha}\e^{\I(\alpha\theta-\theta x/2)}\dif\mue(\alpha)}\\
							&\ge \int_{0}^{1}r^{\alpha}\cos(\alpha\theta-\theta x/2)\dif\mue(\alpha) = \int_{[0,x]}r^{\alpha} \cos(\alpha\theta-\theta x/2)\dif\mue(\alpha) + o(r^{x}) \\
							&\ge r^{x}\cos(\theta x/2)\mue([0,x]) + o(r^{x}) \quad \text{as $r\to 0$}.
\end{align*}
 If $\abs{\theta}\le\pi$, then $\cos(\theta x/2) > 0$, since $x < 1$. Hence $\abs{\Phi_{\eps}(r\e^{\I\theta})} \gtrsim r^{x}$ as $r\to0$.
 
Finally, we state a  theorem on support properties of the fundamental solution $S$ and its representation formula, which is an immediate consequence of the preceding discussion showing that the assumptions \ref{A4}, \ref{A5}, and \ref{A6} are satisfied, and \cite[Theorem 3.5]{KOZ2019}. 
\begin{theorem}
\label{th: support S}
Let $\mus$ and $\mue$ be two positive Radon measures on $[0,1]$, satisfying \eqref{eq:TD}, and forming a proper fractional model (i.e.\ they are not of the form \eqref{eq: classical models}). The fundamental solution $S$ (given by \eqref{eq: definition S}) of the distributed-order fractional wave equation \eqref{eq: DFWE} is supported in the forward cone $\abs{x}\le ct$, with $c=1/k$, $k$ as in \ref{A5}. Inside this cone, we have the following representation of $K=\partial_{t}S$ by an absolutely convergent integral:
\begin{equation}
\label{eq: K in cone}
	K(x,t) = \frac{1}{4\pi\I}\int_{0}^{\infty}\bigl(\Psi(q\e^{-\I\pi})\e^{\,\abs{x}q\Psi(q\e^{-\I\pi})}
			- \Psi(q\e^{\I\pi})\e^{\,\abs{x}q\Psi(q\e^{\I\pi})}\bigr)\e^{-qt}\dif q, \quad \abs{x}<ct,
\end{equation}
where $\Psi$ is defined in \eqref{Psi}.
\end{theorem}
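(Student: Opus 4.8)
The plan is to derive this statement as a corollary of \cite[Theorem 3.5]{KOZ2019}, whose hypotheses are exactly the conditions \ref{A4}, \ref{A5}, and \ref{A6}. For proper fractional models satisfying \eqref{eq:TD} these have all been verified above: \ref{A4} is Lemma \ref{lem: condition A4}, \ref{A5} is Proposition \ref{prop: existence tau} (giving $k=\sqrt{\tau}$), and \ref{A6} follows from the sharper bound \eqref{eq: bound near 0}. So the first and essential step is simply to record that the three hypotheses hold and to invoke the cited theorem, which delivers both the support property and the representation \eqref{eq: K in cone}.

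To make the shape of \eqref{eq: K in cone} transparent, I would next recall the mechanism behind it. Since $K=\partial_{t}S$, one has $\tilde{K}(x,s)=s\tilde{S}(x,s)=\frac{\Psi(s)}{2}\exp(-\abs{x}s\Psi(s))$ by \eqref{eq: definition S}, and the Laplace inversion formula \eqref{eq: Laplace inverse formula} expresses $K$ as a Bromwich integral over a line $\Re s=a>0$. By \ref{A4} the integrand is holomorphic on $\C\setminus(-\infty,0]$, so the contour can be deformed freely. Writing the total exponent as $s(t-\abs{x}\Psi(s))$ and using \ref{A5} ($\Psi(s)\to k$, hence $\Psi(s)\to 1/c$), its leading behaviour is $s(t-\abs{x}/c)$: outside the cone ($\abs{x}>ct$) the coefficient is negative, so one closes to the right across a region where the integrand is analytic, obtaining $K=0$ and the support statement; inside the cone ($\abs{x}<ct$) the coefficient is positive, so the large left arcs vanish and the contour collapses onto the two edges $\arg s=\pm\pi$ of the cut. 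Parametrizing $s=q\e^{\pm\I\pi}=-q$ and using $\e^{st}=\e^{-qt}$ and $-\abs{x}s=\abs{x}q$ then turns the wrapped integral into exactly \eqref{eq: K in cone}, with the two prefactors $\tfrac12$ combining with $\tfrac{1}{2\pi\I}$ to give $\tfrac{1}{4\pi\I}$.

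The main obstacle is the rigorous control of this deformation, and this is precisely where the verified conditions are needed. Vanishing of the large arcs and absolute convergence of \eqref{eq: K in cone} as $q\to\infty$ require \ref{A5} to hold \emph{uniformly} for $\arg s\in[-\pi,\pi]$, and in particular on the cut $\arg s=\pm\pi$ itself; this is the reason we strengthened \ref{A5} relative to \cite{KOZ2019}. Vanishing of the small circle about the origin, and absolute convergence as $q\to0$, follow from \eqref{eq: bound near 0}: since $\Psi(s)\lesssim\abs{s}^{-1/2}$ and $s\Psi(s)\to0$, the integrand is $O(q^{-1/2})$ near $q=0$ and the circular arc of radius $r$ contributes $O(\sqrt{r})\to0$. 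Once the right and left arcs and the small circle are shown to give no contribution, the identity \eqref{eq: K in cone} together with its absolute convergence is established, which completes the proof.
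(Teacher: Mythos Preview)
Your proposal is correct and follows essentially the same approach as the paper: both invoke \cite[Theorem 3.5]{KOZ2019} after noting that \ref{A4}, \ref{A5}, \ref{A6} have been verified (via Lemma \ref{lem: condition A4}, Proposition \ref{prop: existence tau}, and \eqref{eq: bound near 0}), and both then sketch the underlying contour-deformation mechanism. The only minor difference is that the paper's sketch deforms the Bromwich integral for $S$ itself (retaining the small circle of radius $\epsilon$ because of the $1/s$ factor in $\tilde{S}$, obtaining \eqref{eq: S Hankel}, and only then differentiating in $t$ and letting $\epsilon\to0$), whereas you work directly with $K$, whose transform $\tilde{K}=\tfrac{\Psi(s)}{2}\e^{-\abs{x}s\Psi(s)}$ has only the integrable $O(\abs{s}^{-1/2})$ singularity at the origin, so the small circle can be discarded immediately.
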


\begin{remark}
Note that in the case $k=0$, the statement about the support of $S$ reduces to the trivial statement that $S$ is supported in the set $t\ge0$.
\end{remark}

For convenience of the reader, let us briefly sketch the proof idea of the theorem. One starts with the Laplace inversion formula
\[
	S(x,t) = \lim_{Y\to\infty}\frac{1}{2\pi\I}\int_{a-\I Y}^{a+\I Y} \tilde{S}(x,s)\e^{ts}\dif s = 
	\lim_{Y\to\infty}\frac{1}{2\pi\I}\int_{a-\I Y}^{a+\I Y}\frac{\Psi(s)}{2s}\exp\bigl(-\abs{x}s\Psi(s)+ts\bigr)\dif s,
\]
where $a$ is some positive number.
By Proposition \ref{prop: existence tau}, the argument of the exponential is asymptotic to $(t-k\abs{x})s$ for large $s$. When $\abs{x} > ct=t/k$, one uses Cauchy's formula to write this integral as an integral over a semicircle of radius $Y$ in the right half plane. When $Y\to\infty$, this integral converges to $0$ in view of the exponential decay.

When $\abs{x} < ct$, one goes to the left half plane. First one moves via a quarter circle from the point $a+\I Y$ to $a-Y = a+\e^{\I\pi}Y$, then via a Hankel contour one goes up to the origin and back on the other side of the brach cut to the point $a+\e^{-\I\pi}Y$, and finally via a quarter circle one moves to $a-\I Y$. The integrals over the quarter circles again vanish in the limit $Y\to\infty$. For the pieces with $\Re s<0$, this follows from the exponential decay of $\e^{(t-k\,\abs{x})s}$. The integral over the pieces with $0\le \Re s \le a$ (which are of bounded length) tends to zero in view of the bound $\tilde{S}(s) \lesssim 1/\abs{s}$. The remaining integral over the Hankel contour is
\begin{multline}
\label{eq: S Hankel}
	S(x,t) = \frac{1}{4\pi}\int_{-\pi}^{\pi}\Psi(\epsilon\e^{\I\theta})\exp\bigl(-\abs{x}\epsilon\e^{\I\theta}\Psi(\epsilon\e^{\I\theta})+t\epsilon\e^{\I\theta}\bigr)\dif \theta \\
			+ \frac{1}{4\pi\I}\int_{\epsilon}^{\infty}\bigl(\Psi(q\e^{\I\pi})\e^{\,\abs{x}q\Psi(q\e^{\I\pi})} - \Psi(q\e^{-\I\pi})\e^{\,\abs{x}q\Psi(q\e^{-\I\pi})}\bigr)\frac{\e^{-qt}}{q}\dif q, \quad \abs{x}<ct,
\end{multline}
for arbitrary $\epsilon>0$. After differentiating with respect to $t$, we can safely let $\epsilon\to0$ by the estimate \eqref{eq: bound near 0}. This yields \eqref{eq: K in cone}.
\begin{remark}
\begin{itemize}
	\item The theorem is stated in terms of $K=\partial_{t}S$ instead of $S$, since this avoids the singularity of $\tilde{S}$ at $s=0$. The solution of the Cauchy problem \eqref{eq: DFWE}, \eqref{eq: Cauchy data} can be expressed in term of $K$ as $u(x,t) = K(x,t)\ast(u_{0}(x)\delta(t) + v_{0}(x)H(t))$, where $H$ denotes the Heaviside function.
	\item For the classical models \eqref{eq: classical models}, the theorem also holds, with the exception that the integral in \eqref{eq: K in cone} now ranges between (the negatives of) the two branch points. In the Voigt model for example, we have
	\begin{gather*}
		K(x,t) = \frac{1}{4\pi\I}\int_{b_{0}/b_{1}}^{\infty}\bigl(\Psi(q\e^{-\I\pi})\e^{\,\abs{x}q\Psi(q\e^{-\I\pi})}
			- \Psi(q\e^{\I\pi})\e^{\,\abs{x}q\Psi(q\e^{\I\pi})}\bigr)\e^{-qt}\dif q,\quad t>0, \\
			\text{with} \quad \Psi(s) = \sqrt{\frac{a_{0}}{b_{0}+b_{1}s}}.
	\end{gather*}
\end{itemize}
\end{remark}


\section{Smoothness of the fundamental solution}\label{Sec: Smooth}

From the representation \eqref{eq: K in cone}, it follows that $K$ is smooth on the set $\{(x,t): 0<\abs{x}<ct\}$. In fact, it is even real analytic there. Indeed, the integral for $K$ and its derivatives still converge absolutely if one replaces $x$ and $t$ by $x+z_{1}$ and $t+z_{2}$, $z_{1}, z_{2}\in\C$ with $\abs{z_{1}}$ and $\abs{z_{2}}$ sufficiently small, for $x$ and $t$ with $0<\abs{x}<ct$. The real analyticity also holds for the classical models. However, in case of the Voigt or Zener model the analogue of the representation \eqref{eq: K in cone} cannot be used, since differentiating the integrand with respect to $x$ creates a singularity at $q=b_{0}/b_{1}$ which is not integrable. For those models, one uses an analogue of \eqref{eq: S Hankel} to see that they are real analytic there. For the Voigt model one uses for example the representation
\[
	K(x,t) = \frac{1}{4\pi}\int_{\mathcal H}\Psi(s)\e^{-\,\abs{x}s\Psi(s)+ts}\dif s, \quad \Psi(s) = \sqrt{\frac{a_{0}}{b_{0}+b_{1}s}},
\]
where $\mathcal H$ is a Hankel contour encircling the branch cut $(-\infty, -b_{0}/b_{1}]$.

For $\abs{x} > ct$, $K$ is also real analytic since it is zero there. The question remains whether one has smoothness on the boundary of the cone. We will show that this is almost always the case. The starting point is the representation of $K$ as inverse Laplace transform:
\begin{equation}
\label{eq: inverse Laplace K}
	K(x,t) = \lim_{Y\to\infty}\frac{1}{4\pi\I}\int_{a-\I Y}^{a+\I Y}\Psi(s)\exp\bigl(-\abs{x}s\Psi(s)+ts\bigr)\dif s,
\end{equation}
for $a>0$, provided this limit exists. Writing $s=a+\I y$, we see that integrand is bounded by 
\begin{equation}
\label{eq: bound inverse Laplace K}
	\exp\bigl(\,\abs{x}y\Im \Psi(a+\I y) + O(1)\bigr).
\end{equation}
By Proposition \ref{prop: existence tau}, we know that the imaginary part in this exponential tends to zero as $y\to \pm\infty$. We will show that it has opposite sign to $y$, and that it does not decay to zero too quickly. For $x\neq0$, we will conclude that the integrand decays exponentially, so that the integral converges absolutely, and that we may furthermore differentiate indefinitely with respect to $x$ and $t$. Before showing this in full generality, it is instructive to work out some special cases.

\subsubsection*{Special cases} We consider the cases discussed in \cite{KOZ2019} (and in Subsection \ref{Sec: TD}), namely the linear fractional model \eqref{eq: LFM} with the thermodynamical restriction in the form of \eqref{eq: thermo LFM}, and the exponential-type distributed-order model given by \eqref{eq: thermo explaw} with $a=\tau$ and $b=1$ with thermodynamical restriction $0<\tau<1$. 
 From the linear fractional model we exclude the case where $a_{i}/b_{i} = a_{j}/b_{j}$ for every $i, j$, since this case reduces to the classical wave equation. We have 
\[
	\frac{\Phi_{\sigma}(s)}{\Phi_{\eps}(s)} = \frac{\sum_{i=1}^{n}a_{i}s^{\alpha_{i}}}{\sum_{i=1}^{n}b_{i}s^{\alpha_{i}}}
\]
in the model \eqref{eq: LFM}, and 
\[
	\frac{\Phi_{\sigma}(s)}{\Phi_{\eps}(s)} = \frac{(\tau s-1)\log s}{(s-1)\log(\tau s)}
\]
in the exponential-type distributed-order model.

Suppose first that $a_{n} \neq 0$ and $b_{n}\neq 0$. Let $m$ be such that $\frac{a_{m}}{b_{m}} > \frac{a_{m+1}}{b_{m+1}} = \dotso = \frac{a_{n}}{b_{n}} \eqqcolon \tau$. Using Taylor approximations, we see that for large $s$
\[
	 \Psi(s) = \sqrt{\tau}\biggl\{1 + \frac{1}{2}\biggl(\frac{a_{m}}{a_{n}} - \frac{b_{m}}{b_{n}}\biggr)s^{\alpha_{m}-\alpha_{n}} + O\bigl(\,\abs{s}^{\alpha_{m-1} - \alpha_{n}} + \abs{s}^{2(\alpha_{m}-\alpha_{n})} + \abs{s}^{\alpha_{m} + \alpha_{n-1}-2\alpha_{n}})\bigr)\biggr\},
\]
and where the first error term only occurs of $m>1$. Hence 
\begin{equation}
\label{eq: example1}
	\Im \Psi(a+\I y) \sim \mp\frac{\sqrt{\tau}}{2}\sin\bigl((\alpha_{n}-\alpha_{m})\pi/2\bigr)\biggl(\frac{a_{m}}{a_{n}}-\frac{b_{m}}{b_{n}}\biggr)\abs{y}^{-(\alpha_{n}-\alpha_{m})}, \quad \text{as $y\to\pm\infty$}.
\end{equation}
In the case that $a_{m+1} = \dotso = a_{n} = 0$ and $a_{m}\neq 0$, then 
\[
	 \Psi(s) = \sqrt{\frac{a_{m}}{b_{n}}}s^{(\alpha_{m}-\alpha_{n})/2}\bigl\{1+O\bigl(\, \abs{s}^{\alpha_{n-1}-\alpha_{n}} + \abs{s}^{\alpha_{m-1}-\alpha_{m}}\bigr)\bigr\},
\]
and where the second error term only occurs when $m>1$. This gives
\begin{equation}
\label{eq: example2}
	\Im \Psi(a+\I y) \sim \mp\sqrt{\frac{a_{m}}{b_{n}}}\sin\bigl((\alpha_{n}-\alpha_{m})\pi/4\bigr)\abs{y}^{-(\alpha_{n}-\alpha_{m})/2}, \quad \text{as $y\to\pm\infty$}.
\end{equation}
In the exponential-type distributed-order model, we have
\[
	 \Psi(s) = \sqrt{\tau}\biggl\{1 - \frac{\log\tau}{2\log s} + \frac{3(\log\tau)^{2}}{8(\log s)^{2}} + O\biggl(\frac{1}{\,\abs{\log s}^{3}}\biggr)\biggr\},
\]
yielding
\begin{equation}
\label{eq: example3}
	\Im \Psi(a+\I y) \sim \mp \frac{\sqrt{\tau}\pi\log(1/\tau)}{4} \frac{1}{(\log \abs{y})^{2}}, \quad \text{as $y\to\pm\infty$}.
\end{equation}

In these examples, $\Im \Psi(a+\I y)$ decays less quickly to zero than $\abs{y}^{-1}$, except in the first one in the case that $\alpha_{n}=1$ and $\alpha_{m}=0$. Inserting the obtained asymptotics in \eqref{eq: bound inverse Laplace K}, we obtain that the integrand of \eqref{eq: inverse Laplace K} decays like $\e^{-c\,\abs{x}y^{\epsilon}}$ for some positive $c$ and $\epsilon$, so that we may differentiate indefinitely with respect to $x$ and $t$ if $x\neq0$, showing that $K$ is smooth there.

This $\gtrsim\abs{y}^{-1+\epsilon}$-decay for $\Im \Psi(a+\I y)$ holds actually in general, save for some exceptional cases where one cannot in general expect smoothness of $K$ on the boundary of the light cone. For example, formula \eqref{eq: example1} in the case that $\alpha_{n}=1$ and $\alpha_{m}=0$ implies that for some $\delta>0$,
\[
	\tilde{S}(x,s) = \frac{\sqrt{\tau}}{2s}\biggl(1+\frac{\gamma}{2s} + O\bigl(\,\abs{s}^{-1-\delta}\bigr)\biggr)
	\exp\biggl(-\abs{x}\sqrt{\tau}s - \frac{\sqrt{\tau}\gamma\abs{x}}{2} + O\bigl(\,\abs{x}\abs{s}^{-\delta}\bigr)\biggr), \quad \gamma \coloneqq \frac{a_{m}}{a_{n}}-\frac{b_{m}}{b_{n}},
\]
from which one can show\footnote{See e.g. \cite[Theorem 5.1]{BO} where this is worked out in detail for the SLS case.} that 
\[
	S(x,t) = \frac{\sqrt{\tau}}{2}\exp\biggl(-\frac{\sqrt{\tau}\gamma}{2}\abs{x}\biggr)H(t-\sqrt{\tau}\abs{x}) + \text{continuous function},
\]
where $H$ denotes the Heaviside function. This shows that $S$ is not continuous on the boundary of the cone $\abs{x}\le t/\sqrt{\tau}$.
	
Let us first describe these exceptional cases in general.
\subsubsection*{Exceptional cases}Let $\tau>0$, $a>0$, $b\ge0$ and suppose $a/b\geq\tau$ (with again $a/0=\infty$). Let $\lambda$ be a positive Radon measure on $[0,1]$ with $\lambda(\{0\})=0$ and $\max\supp\lambda=1$. The exceptional cases are those pairs of measures $\mus, \mue$ of the form 
\begin{equation}
\label{eq: exceptional case}
	\mus(\alpha) = a\delta(\alpha) + \tau\lambda(\alpha), \quad \mue(\alpha) = b\delta(\alpha) + \lambda(\alpha).
\end{equation} 
Note that of the classical mechanical models \eqref{eq: classical models}, the Hooke model, the Maxwell model, and the Zener model are exceptional. It is known that for those models, the fundamental solution is discontinuous on the boundary of the cone $\abs{x} \le t/\sqrt{\tau}$ (see \cite[Theorem 5.1]{BO} for the Zener case; the same reasoning applies to the Maxwell case).
\bigskip

We now turn back to the general case. The following lemma will provide the estimate which will yield smoothness of $K$ for $x\neq0$. Then, in the Theorem \ref{Thm: Gevrey}, we will show Gevrey regularity in this set.   
\begin{lemma}
\label{lem: lower bound Im Psi}
Assume that $\mus,\mue$ are positive Radon measures satisfying \eqref{eq:TD} and $\Psi$ is defined as in \eqref{Psi}. 
Then 
\begin{equation}
\label{eq: sgn Im Psi}
	\sgn\Im\Psi(s) = -\sgn\Im s.
\end{equation}
If in addition $\mus,\mue$ are not exceptional, i.e. not of the form \eqref{eq: exceptional case}, then there exists an $\eta>-1$ so that uniformly for $\theta=\arg s \in [\pi/4, 3\pi/4]$:
\begin{equation}\label{eq: ImPsi}
	\abs{\Im\Psi(s)} \gtrsim \abs{s}^{\eta}, \quad \mbox{as } \abs{s}\to\infty.
\end{equation}
\end{lemma}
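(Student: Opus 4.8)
The plan is to handle the two assertions separately, deriving the sign from the thermodynamic positivity of $\nues$ and the lower bound from a real-variable estimate on $\Phi_{\sigma},\Phi_{\eps}$. For \eqref{eq: sgn Im Psi} I would work in the open upper half plane and set $w(s):=\Phi_{\sigma}(s)/\Phi_{\eps}(s)=\Psi(s)^{2}$. Symmetrising the defining double integral in $\alpha,\beta$, exactly as in the computation of $\Im\hat E$ in Section \ref{Sec: TD}, gives for $s=R\e^{\I\theta}$ with $\theta\in(0,\pi)$
\[ \Im w(s)=-\frac{1}{\,\abs{\Phi_{\eps}(s)}^{2}}\iint_{\Delta}R^{\alpha+\beta}\sin((\alpha-\beta)\theta)\dif\nues(\alpha,\beta). \]
On $\Delta$ one has $(\alpha-\beta)\theta\in(0,\pi)$, so the sine is non-negative and \eqref{eq:TD} forces $\Im w\le0$ on the whole upper half plane, while $w\neq0$ by Lemma \ref{lemma: 1}. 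Thus $w$ maps the (simply connected) upper half plane into the lower half plane, and it avoids $(-\infty,0]$ (the origin by Lemma \ref{lemma: 1}; the negative axis because otherwise the harmonic function $\Im w$ would vanish at an interior point, forcing $w$ to be a positive constant). The branch of the square root normalised by $\Psi>0$ on $(0,\infty)$ then puts $\arg\Psi\in(-\pi/2,0]$, i.e.\ $\Im\Psi\le0$; the case $\Im s<0$ follows from $\Psi(\bar s)=\overline{\Psi(s)}$. (The degenerate possibility $\Im\Psi\equiv0$ means $\mus,\mue$ are proportional, i.e.\ the classical wave equation, which I exclude.)

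For the lower bound I would first pass from $\Psi$ to $w$. From $\Im w=2\Re\Psi\,\Im\Psi$ one has $\abs{\Im\Psi}\ge\abs{\Im w}/(2\abs{\Psi})$, and by Proposition \ref{prop: existence tau} the factor $\abs{\Psi}$ is bounded above uniformly for $\theta\in[\pi/4,3\pi/4]$ (and bounded below when $\tau>0$; when $\tau=0$ the division by $\abs{\Psi}\to0$ only improves the estimate), so it suffices to bound $\abs{\Im w}$ from below. On $[\pi/4,3\pi/4]$ we have $(\alpha-\beta)\theta\in(0,3\pi/4]$, whence by concavity $\sin((\alpha-\beta)\theta)\gtrsim(\alpha-\beta)$; together with the elementary bound $\abs{\Phi_{\eps}(s)}\le R^{M}\mue([0,1])$ (using $R^{\alpha}\le R^{M}$, where $M=\max\supp\mue$) this reduces everything to a lower bound, uniform in $\theta$, for the real quantity
\[ J(R):=\iint_{\Delta}R^{\alpha+\beta}(\alpha-\beta)\dif\nues\ \ge0. \]
Antisymmetrising the integrand over the full square $[0,1]^{2}$ evaluates $J$ in closed form as $J(R)=R\bigl(\Phi_{\eps}'(R)\Phi_{\sigma}(R)-\Phi_{\eps}(R)\Phi_{\sigma}'(R)\bigr)$, and the goal becomes $J(R)\gtrsim R^{2M-1+\delta}$ for some $\delta>0$.

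To estimate $J$ I would set $\rho:=\mus-\tau\mue$, so that $\nues=\mue\times\rho-\rho\times\mue$ identically. Since $F$ of \eqref{eq: def F} is non-increasing with limit $\tau$, one has $F\ge\tau$, and $\rho([x,M])=\mue([x,M])(F(x)-\tau)$ is a non-negative, non-increasing function of $x$; hence $\rho$ is a non-negative measure carrying no mass at $M$. Writing $M_{\rho}:=\max\supp\rho$, a short check identifies the exceptional profile \eqref{eq: exceptional case} with the single configuration $M=1$ and $M_{\rho}=0$ (so $\rho$ a point mass at the origin). When $M_{\rho}<M$ I would restrict $J$ to a rectangle $[M-h,M]\times[M_{\rho}-h',M_{\rho}]\subset\Delta$ with $h<M-M_{\rho}$: there the piece $\rho\times\mue$ vanishes (no $\rho$-mass above $M_{\rho}$), $\rho([M_{\rho}-h',M_{\rho}])>0$ since $M_{\rho}=\max\supp\rho$, and one obtains $J(R)\gtrsim R^{M+M_{\rho}-h-h'}$, hence a decay exponent for $\abs{\Im w}$ arbitrarily close to $M_{\rho}-M$. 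This is $>-1$ precisely in the non-exceptional case, and feeding it back through $\abs{\Im\Psi}\ge\abs{\Im w}/(2\abs{\Psi})$ yields \eqref{eq: ImPsi}, with $\eta$ near $M_{\rho}-M$ when $\tau>0$ and near $(M_{\rho}-M)/2$ when $\tau=0$.

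The step I expect to be the real obstacle is the remaining case $M_{\rho}=M$, where $\rho$ reaches the top of the support: then every rectangle capturing the dominant weight $R^{\alpha+\beta}$ must straddle the corner $(M,M)$, both $\mue\times\rho$ and $\rho\times\mue$ contribute to $\nues$ there, and one must rule out cancellation between them. I would resolve this again through the monotone tail ratio $F$ from \eqref{eq: def F}: its monotonicity and its limit $\tau$ quantify exactly how the positive mass of $\rho$ is distributed just below $M$, and this bookkeeping (rather than any single inequality) is what produces a lower bound for $J(R)$ of order close to $R^{2M}$, so that $\abs{\Im w}\gtrsim R^{-\delta}$ for every $\delta>0$. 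It is precisely the excluded profile \eqref{eq: exceptional case} for which $J(R)/R^{2M}\asymp R^{-1}$, matching the borderline $\eta=-1$ that the statement forbids, so the non-exceptional hypothesis enters exactly at this point.
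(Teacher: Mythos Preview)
Your approach is essentially the paper's, repackaged. The sign identity is derived the same way (the paper writes $\Im w$ as the $\nues$-integral and reads off its sign; it then uses the explicit formula $\Im\sqrt{u+\I v}=(\sgn v)\abs{v}/\sqrt{2(\sqrt{u^{2}+v^{2}}+u)}$, which is your inequality $\abs{\Im\Psi}\ge\abs{\Im w}/(2\abs{\Psi})$ in closed form). Your device of setting $\rho=\mus-\tau\mue$ and organising the cases by $M_{\rho}=\max\supp\rho$ is a clean unification: your case $M_{\rho}<M$ absorbs both the paper's Case~1 ($M_{\sigma}<M_{\eps}$, so $\tau=0$, $\rho=\mus$, $M_{\rho}=M_{\sigma}$) and its Case~3 ($M_{\sigma}=M_{\eps}$ but $F(x)=\tau$ on a tail), with the same rectangle argument. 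The preliminary replacement $\sin((\alpha-\beta)\theta)\gtrsim(\alpha-\beta)$ to obtain the $\theta$-free $J(R)$ is a minor simplification over the paper, which instead keeps $\theta$ and bounds the sine on each rectangle separately; your closed form $J(R)=R(\Phi_{\eps}'\Phi_{\sigma}-\Phi_{\eps}\Phi_{\sigma}')$ is a nice observation but you do not actually use it.

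The genuine gap is the case $M_{\rho}=M$, which you flag but do not prove. This is exactly the paper's Case~2 ($F(x)>\tau$ strictly for all $x<M$), and the cancellation you worry about near the corner $(M,M)$ is real: a naive rectangle there sees both $\mue\times\rho$ and $\rho\times\mue$. The paper resolves it by a concrete three-level construction. Given $x<M$, set $\epsilon=F(x)-\tau>0$ and pick $x<y<z<M$ with $F(y)\le\tau+\epsilon/2$ and $F(z)\le\tau+\epsilon/4$ (possible since $F\searrow\tau$); on the rectangle $V=[z,M]\times[x,y)$ one then computes directly
\[
\nues(V)=\mue([z,M])\bigl(\mue([x,M])(F(x)-F(z))-\mue([y,M])(F(y)-F(z))\bigr)\ge\tfrac{\epsilon}{4}\,\mue([z,M])\mue([x,M])>0,
\]
and since $x$ can be taken arbitrarily close to $M$ this gives $\abs{\Im w}\gtrsim_{\delta}R^{-\delta}$ for every $\delta>0$. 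Your $J(R)$ framework accommodates this without change (integrate $R^{\alpha+\beta}(\alpha-\beta)$ over the same $V$), so the missing ingredient is precisely this staircase choice of $x,y,z$; the vague appeal to ``bookkeeping through $F$'' does not by itself produce it.
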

\begin{proof}
Write $s=R\e^{\I\theta}$ and $\Phi_{\sigma}(s)/\Phi_{\eps}(s) = u+\I v$. We have that 
\begin{align}
	v = \Im \frac{\Phi_{\sigma}(s)}{\Phi_{\eps}(s)} = -\frac{1}{\abs{\Phi_{\eps}(s)}^{2}}\iint_{\Delta}R^{\alpha+\beta}\sin\bigl((\alpha-\beta)\theta\bigr)\dif\nues(\alpha,\beta),	
\label{eq: v}
\end{align}
where again $\Delta = \{(\alpha,\beta)\in [0,1]: \alpha>\beta\}$.
By the thermodynamical restriction \eqref{eq:TD}, the first assertion immediately follows. 

The imaginary part of $\sqrt{u+\I v}$ can be written as follows:
\[
	\Im \sqrt{u+\I v} 	= \frac{\sgn v}{\sqrt{2}}\sqrt{\sqrt{u^{2}+v^{2}}-u} = \frac{\sgn v}{\sqrt{2}}\frac{\abs{v}}{\sqrt{\sqrt{u^{2}+v^{2}}+u}}.
\]
For the denominator we have by Proposition \ref{prop: existence tau} and by \eqref{eq: behavior Phi} and \eqref{eq: lower bound Phi} that for any $\delta>0$
\[
	\sqrt{\sqrt{u^{2}+v^{2}}+u} \lesssim \abs{\Psi(s)} \lesssim \min\{1, O_{\delta}(R^{(M_{\sigma}-M_{\eps})/2 + \delta})\}.
\]
We next derive a lower bound for $\abs{v}$, and consider three cases.

\bigskip

\noindent\textbf{Case 1.} $M_{\sigma} < M_{\eps}$.

\noindent Consider the set $V = [M_{\eps}-\delta,M_{\eps}] \times [\max\{M_{\sigma}-\delta,0\},M_{\sigma}]$, which is a subset of $\Delta$ if $\delta>0$ is small enough. If $\delta$ is so small that $M_{\sigma} < M_{\eps}-\delta$, then 
$$\nues(V) = \mus([\max\{M_{\sigma}-\delta,0\},M_{\sigma}])\mue([M_{\eps}-\delta,M_{\eps}]) > 0.$$ Hence 
\begin{align*}
	\abs{v} 	&\gtrsim R^{-2M_{\eps}}\iint_{V}R^{\alpha+\beta}\sin\bigl(\theta(\alpha-\beta)\bigr)\dif\nues(\alpha,\beta) \\
			&\ge \nues(V)\min\bigl\{\sin\bigl(M_{\eps}-\delta-M_{\sigma})\theta\bigr), \sin\theta\bigr\} R^{M_{\sigma}-M_{\eps}-2\delta}, 
\end{align*}
so that $\Im\Psi(s) \gtrsim_{\delta}R^{(M_{\sigma}-M_{\eps})/2-3\delta}$. Since $(M_{\sigma}-M_{\eps})/2 \ge -1/2$, the lemma follows in this case upon taking $\delta$ small enough.

\mbox{}

\noindent\textbf{Case 2.} $M_{\sigma} = M_{\eps} = M$ and $\forall x\in[0,M): F(x)>\tau$ (recall that $F$ is defined by \eqref{eq: def F}).

\noindent In this case we will even have $\abs{\Im \Psi(s)} \gtrsim_{\delta} R^{-\delta}$ for every $\delta>0$. Let $x<M$, and set $\epsilon = F(x)-\tau > 0$. Let $y\in(x,M)$ be such that $F(y) \le \tau+\epsilon/2$, and let $z\in(y,M)$ be such that $F(z)\le \tau+\epsilon/4$. Set $V= [z,M] \times [x,y)$; we will show that $\nues(V)>0$.
\begin{align*}
	\nues(V)	&= \mue([z,M])\mus([x,y)) - \mus([z, M])\mue([x,y)) \\
			&= \mue([z,M])\mue([x,M])F(x) - \mue([z,M])\mue([y,M])F(y) \\
			&\phantom{=} \quad - F(z)\mue([z,M])\mue([x, M]) + F(z)\mue([z,M])\mue([y,M]) \\
			&=\mue([z,M])\bigl(\mue([x,M])(F(x)-F(z)) - \mue([y,M])(F(y)-F(z))\bigr) \\
			&\ge \mue([z,M])\bigl(\mue([x, M]3\epsilon/4 - \mue([y,M])\epsilon/2\bigr) \\
			&\ge (\epsilon/4)\mue([z,M])\mue([x,M]) >0.
\end{align*}
Using this we get that 
\begin{align*}
	\abs{\Im \Psi(s)} 	&\gtrsim \abs{v} \gtrsim R^{-2M}\iint_{V}R^{\alpha+\beta}\sin\bigl((\alpha-\beta)\theta\bigr)\dif\nues(\alpha,\beta) \\
					&\ge \nues(V)\min\bigl\{\sin\bigl((z-y)\theta\bigr), \sin\theta\bigr\}R^{z + x-2M}.
\end{align*}
Here we may even take $x$ arbitrarily close to $M$, leading to the bound $\gtrsim_{\delta} R^{-\delta}$ for every $\delta>0$.

\mbox{}

\noindent\textbf{Case 3.} $M_{\sigma} = M_{\eps} = M$ and $\exists x\in [0,M)$ such that $F(x) = \tau$.

\noindent Note that by monotonicity, $F$ is necessarily constant and equal to $\tau$ on $[x,M)$. Let $y\in [0,M)$ be such that $F(y)>\tau$. (If $F(y)=\tau$ for every $y\ge0$, then $\mus=\tau\mue$ which is an exceptional case.) Then if $\delta < M-x$, we have $\mus([y,M-\delta)) > \tau\mue([y,M-\delta))$. Let then $V = [M-\delta/2, M] \times [y, M-\delta)$, and note that $\nues(V) = \mue([M-\delta/2, M])\mus([y,M-\delta)) - \tau\mue([M-\delta/2, M])\mue([y,M-\delta)) > 0$. Hence
\begin{align*}
	\abs{\Im \Psi(s)} 	&\gtrsim \abs{v} \gtrsim R^{-2M}\iint_{V}R^{\alpha+\beta}\sin\bigl((\alpha-\beta)\theta\bigr)\dif\nues(\alpha,\beta) \\
					&\ge \nues(V)\min\bigl\{\sin(\delta\theta/2),\sin\theta\bigr\}R^{-M-\delta/2+y}.
\end{align*}
If $M=1$, then we can take $y>0$ by assumption. Indeed, if $M=1$ and $F(y)=\tau$ for all $y>0$, we are in an exceptional case. Taking such strictly positive $y$ and $\delta$ such that $\delta/2 < y$ yields the result. If $M<1$, then we take $\delta$ such that $\delta/2 < 1-M$.
\end{proof}
Note that the worked out examples \eqref{eq: example1} (excluding the case $\alpha_{n}=1$, $\alpha_{m}=0$), \eqref{eq: example2}, and \eqref{eq: example3} are instances of \textbf{case 3}, \textbf{case 1}, and \textbf{case 2} respectively in the proof of the lemma. 


The estimate $\abs{\Im\Psi(s)}\gtrsim \abs{s}^{\eta}$ provided by the lemma will yield smoothness of $K$ for $x\neq0$. Actually, we will show Gevrey regularity in this set.  Recall that for  $\beta\ge 0$ and $\Omega \subseteq \R^{2}$ open, a function $\vphi\in C^{\infty}(\Omega)$ is in the Gevrey class of order $\beta$, i.e.  $\vphi\in G^{\beta}(\Omega)$,  if for every compact $A\subseteq\Omega$ there exists a constant $C=C_{A}>0$ such that 
\begin{equation}\label{Gc}
	\sup_{(x,t)\in A}\abs{\dpd[n]{}{x}\dpd[m]{}{t} \vphi(x,t)} \le C^{1+n+m}\Gamma(\beta(n+m+1)), \quad \text{for all $n, m \ge0$},
\end{equation}
where $\Gamma$ denotes the Gamma function.

\begin{theorem}\label{Thm: Gevrey} 
Assume that $\mus,\mue$ are positive Radon measures satisfying \eqref{eq:TD} which do not belong to the exceptional cases. Let $\Psi$ be defined as in \eqref{Psi} and  let $\eta>-1$ be such that \eqref{eq: ImPsi} holds.
Then on the set $\Omega = \{(x,t): x\neq0\}$, $K$ belongs to the Gevrey class $G^{\beta}(\Omega)$ of order $\beta=1/(1+\eta)$.
\end{theorem}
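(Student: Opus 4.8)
The plan is to work directly from the inverse-Laplace representation \eqref{eq: inverse Laplace K}, showing that the integral and all its $x$- and $t$-derivatives converge absolutely and satisfy the Gevrey bound \eqref{Gc} on any compact subset $A$ of $\Omega$. Fix such an $A$, so that $0<\rho\le\abs{x}\le X_{0}$ and $\abs{t}\le T_{0}$ there, and fix the abscissa $a>0$ in \eqref{eq: inverse Laplace K}. First I would record two uniform facts about $\Psi$ along the line $\Re s=a$. Since $\Psi$ is continuous on this line and tends to $\sqrt{\tau}$ as $\abs{s}\to\infty$ by Proposition \ref{prop: existence tau}, it is bounded: $\abs{\Psi(a+\I y)}\le C_{0}$ for all $y$. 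Moreover, for $\abs{y}\ge a$ one has $\arg(a+\I y)\in[\pi/4,\pi/2)\cup(-\pi/2,-\pi/4]$ and $\abs{a+\I y}\asymp\abs{y}$, so Lemma \ref{lem: lower bound Im Psi} (via \eqref{eq: ImPsi}, together with the reflection $\Psi(\bar s)=\overline{\Psi(s)}$, which transfers the estimate to negative arguments) gives $\abs{\Im\Psi(a+\I y)}\gtrsim\abs{y}^{\eta}$, while \eqref{eq: sgn Im Psi} gives $\sgn\Im\Psi(a+\I y)=-\sgn y$.

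Writing $s=a+\I y$, the real part of the exponent in \eqref{eq: inverse Laplace K} is
\[
	\Re\bigl(-\abs{x}s\Psi(s)+ts\bigr) = -\abs{x}a\Re\Psi(a+\I y) + \abs{x}y\Im\Psi(a+\I y) + ta.
\]
The first and third terms are $O(1)$ uniformly on $A$ and in $y$ (using $\abs{\Psi}\le C_{0}$), while the middle term equals $-\abs{x}\abs{y}\abs{\Im\Psi(a+\I y)}\le0$ by the sign relation, and is $\le -c\rho\abs{y}^{1+\eta}$ for $\abs{y}\ge a$. Hence the integrand is dominated by $C\e^{-c\rho\abs{y}^{1+\eta}}$ for $\abs{y}\ge a$; since $\eta>-1$ this is integrable, the limit in \eqref{eq: inverse Laplace K} exists, and standard domination legitimises differentiation under the integral. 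Because the exponent is linear in $\abs{x}$ (with $\partial_{x}\abs{x}=\sgn x$ on $\Omega$) and $\Psi$ is independent of $(x,t)$, each $x$-derivative pulls down $-\sgn(x)\,s\Psi(s)$ and each $t$-derivative pulls down $s$, giving
\[
	\partial_{x}^{n}\partial_{t}^{m}K(x,t) = \frac{(-\sgn x)^{n}}{4\pi\I}\int_{\Re s=a}\Psi(s)^{n+1}s^{n+m}\exp\bigl(-\abs{x}s\Psi(s)+ts\bigr)\dif s.
\]

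I would then bound the absolute value by splitting the line into $\abs{y}<a$ and $\abs{y}\ge a$. Using $\abs{\Psi}\le C_{0}$ and $\abs{a+\I y}\le\sqrt2\,\abs{y}$ for $\abs{y}\ge a$, the bounded part contributes at most $C^{1+n+m}$, while the tail is controlled by $C_{0}^{n+1}(\sqrt2)^{n+m}\int_{a}^{\infty}y^{\,n+m}\e^{-c\rho y^{1+\eta}}\dif y$. The substitution $u=c\rho\,y^{1+\eta}$ turns this into a Gamma integral,
\[
	\int_{0}^{\infty}y^{\,N}\e^{-c\rho y^{1+\eta}}\dif y = \frac{(c\rho)^{-(N+1)/(1+\eta)}}{1+\eta}\,\Gamma\Bigl(\tfrac{N+1}{1+\eta}\Bigr), \qquad N=n+m,
\]
and since $(N+1)/(1+\eta)=\beta(n+m+1)$, this is exactly the factor appearing in \eqref{Gc}. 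Absorbing the constants $C_{0},c,\rho,a$ into a single $C=C_{A}$ yields $\abs{\partial_{x}^{n}\partial_{t}^{m}K(x,t)}\le C^{1+n+m}\Gamma(\beta(n+m+1))$ uniformly on $A$, which is precisely $K\in G^{\beta}(\Omega)$.

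The main obstacle is the matching of exponents: everything hinges on getting the stretched-exponential decay rate $\abs{y}^{1+\eta}$ supplied by Lemma \ref{lem: lower bound Im Psi} to line up, through the substitution, with the Gamma argument $\beta(n+m+1)=(n+m+1)/(1+\eta)$, so that the polynomial growth $\abs{s}^{n+m}$ from the $t$-derivatives (and the harmlessly bounded $\Psi^{n+1}$ from the $x$-derivatives) is converted into exactly the factorial growth of Gevrey order $\beta$. The remaining care is to verify that the domination is uniform on $A$ — in particular that $\abs{x}$ stays bounded away from $0$, which is exactly why $\Omega$ excludes $x=0$ — so that differentiation under the integral sign is valid and the constants are genuinely independent of $n,m$ and of $(x,t)\in A$.
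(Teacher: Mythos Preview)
Your proof is correct and follows essentially the same approach as the paper's: differentiate the inverse-Laplace representation \eqref{eq: inverse Laplace K}, use the boundedness of $\Psi$ and the lower bound $\abs{\Im\Psi}\gtrsim\abs{y}^{\eta}$ from Lemma \ref{lem: lower bound Im Psi} to obtain stretched-exponential decay of the integrand, then evaluate the resulting moment integral as a Gamma function. The only cosmetic differences are your split at $\abs{y}=a$ rather than $\abs{y}=1$, and your explicit mention of the reflection $\Psi(\bar s)=\overline{\Psi(s)}$ and of $\sgn x$, which the paper handles by invoking evenness in $x$ and treating only $y\ge0$.
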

\begin{proof}
Set $\beta = 1/(1+\eta)$. We have to show that for any compact $A\subseteq \Omega$, there exists a positive constant $C=C_{A}$ such that \eqref{Gc} holds.
Fix some $a>0$ and let $C_{1}$ and $C_{2}$ be constants such that 
\[
	\abs{\Psi(a+\I y)} \le C_{1}, \quad\text{for $y\ge0$}, \qquad \Im \Psi(a+\I y) \le -C_{2}y^{\eta}, \quad \text{for $y\ge1$}.
\]
Since $K$ is even in $x$, we may suppose that $x>0$. Differentiating under the integral in \eqref{eq: inverse Laplace K}, we get
\begin{align*}
	\dpd[n]{}{x}\dpd[m]{}{t}&K(x,t) = \frac{(-1)^{n}}{4\pi\I}\int_{a-\I\infty}^{a+\I\infty}\Psi(s)^{n+1}s^{n+m}\exp\bigl(-xs\Psi(s) + ts\bigr)\dif s \\
					&\lesssim \e^{at}C_{1}^{n+1}2^{n+m}\biggl\{\int_{0}^{1}(a^{n+m}+y^{n+m})\dif y + \int_{1}^{\infty}(a^{n+m}+y^{n+m})\e^{-xC_{2}y^{1+\eta}}\dif y\biggr\} \\
					&\lesssim \e^{at}C_{1}^{n+1}2^{n+m}\biggl\{a^{n+m}+1 + \biggl(\frac{1}{xC_{2}}\biggr)^{\frac{n+m+1}{1+\eta}}\Gamma\biggl(\frac{n+m+1}{1+\eta}\biggr)\biggr\}.
\end{align*}
The result now easily follows.
\end{proof}
\begin{remark}
For exceptional models, the fundamental solution may or may not be smooth on $x\neq0$. For example, the lower bound $\Im \Psi(s) \gtrsim (\log \abs{s})^{2}/\abs{s}$ is already sufficient to obtain smoothness (although not of Gevrey type), while $\Im \Psi(s) \gtrsim \log \abs{s}/\abs{s}$ does not guarantee smoothness.
\end{remark}

On the half line $x=0, t\ge0$ the fundamental solution $K$ is not smooth:
\begin{proposition}
For proper fractional models, the half-line $\{0\}\times[0,\infty)$ is a set of discontinuity for $\partial_{x}K$.
\end{proposition}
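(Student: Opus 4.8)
The plan is to work from the representation \eqref{eq: K in cone} of $K$ inside the forward cone and to compute the one-sided limits of $\partial_x K$ as $x\to0^{\pm}$ at a fixed $t>0$. Since $K$ is even in $x$, the function $\partial_x K$ is odd in $x$, so it suffices to evaluate $\lim_{x\to0^+}\partial_x K(x,t)$ and to show it is nonzero: the limit from the left is then its negative, forcing a genuine jump across the half-line. First I would rewrite the integrand of \eqref{eq: K in cone} using the reflection symmetry $\Psi(q\e^{-\I\pi})=\overline{\Psi(q\e^{\I\pi})}$, which holds because $\Phi_{\sigma},\Phi_{\eps}$ are built from real measures. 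Writing $W(q):=\Psi(q\e^{\I\pi})$, the bracket becomes $\overline{W}\e^{xq\overline{W}}-W\e^{xqW}=-2\I\,\Im\bigl(W\e^{xqW}\bigr)$, so that for $0<x<ct$
\[
	K(x,t) = -\frac{1}{2\pi}\int_{0}^{\infty}\Im\bigl(W(q)\e^{xqW(q)}\bigr)\e^{-qt}\dif q .
\]

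Next I would differentiate under the integral and let $x\to0^+$. The bound \eqref{eq: bound near 0} gives $\abs{W(q)}\lesssim q^{-1/2}$ as $q\to0$, while Proposition \ref{prop: existence tau} gives $\Re W(q)\to k$ as $q\to\infty$; together with $x<ct=t/k$ these furnish an $x$-uniform integrable majorant $q\abs{W(q)}^{2}\e^{xq\Re W(q)-qt}$ on any $x\in[0,x_0]$ with $x_0<ct$, justifying dominated convergence (the case $k=0$ is easier, as $\Re W\to0$). This yields
\[
	\lim_{x\to0^+}\partial_x K(x,t) = -\frac{1}{2\pi}\int_{0}^{\infty} q\,\Im\bigl(W(q)^{2}\bigr)\e^{-qt}\dif q .
\]
Since $W(q)^{2}=\Phi_{\sigma}(q\e^{\I\pi})/\Phi_{\eps}(q\e^{\I\pi})$, formula \eqref{eq: v} evaluated at $\theta=\pi$ gives
\[
	\Im\bigl(W(q)^{2}\bigr) = -\frac{1}{\abs{\Phi_{\eps}(q\e^{\I\pi})}^{2}}\iint_{\Delta} q^{\alpha+\beta}\sin\bigl((\alpha-\beta)\pi\bigr)\dif\nues(\alpha,\beta),
\]
with denominator finite and nonzero by Lemma \ref{lem: condition A4}. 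The weight $q^{\alpha+\beta}\sin((\alpha-\beta)\pi)$ is strictly positive on $\Delta$ except at the single corner $(1,0)$, and $\nues\ge0$ by \eqref{eq:TD}; hence $\Im(W(q)^{2})<0$ for every $q>0$ as soon as $\nues(\Delta\setminus\{(1,0)\})>0$. In that case the integral above is strictly negative, so $\lim_{x\to0^+}\partial_x K(x,t)>0$ for all $t>0$, and by oddness the left limit is strictly negative: $\partial_x K$ is discontinuous at each $(0,t)$.

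The main obstacle is the measure-theoretic claim $\nues(\Delta\setminus\{(1,0)\})>0$ for a proper fractional model. I would argue contrapositively: if $\nues$ charges only the corner $(1,0)$, then the antisymmetric measure $\mue\times\mus-\mus\times\mue$ is supported on $\{(1,0),(0,1)\}$. Projecting onto each coordinate shows $\mus([0,1])\,\mue-\mue([0,1])\,\mus$ is supported on $\{0,1\}$, i.e.\ $\mus$ and $\mue$ are proportional on $(0,1)$; writing $\mus=\rho\,\mue^{\mathrm{int}}+a_0\delta_0+a_1\delta_1$ and $\mue=\mue^{\mathrm{int}}+b_0\delta_0+b_1\delta_1$ and expanding the product, the parts of $\nues$ living on $(0,1)\times\{0\}$ and on $\{1\}\times(0,1)$ force $a_0=b_0$ and $a_1=b_1$ whenever $\mue^{\mathrm{int}}\neq0$, and a short comparison of the total masses then yields $\mus=\lambda\mue$ for some $\lambda>0$. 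This is precisely the degenerate case in which \eqref{eq: const. eq. measures} collapses to $\sigma=\eps/\lambda$ and \eqref{eq: DFWE} to the classical wave equation, and is therefore excluded among genuine fractional models. Hence for every (proper) fractional model $\nues$ carries mass off the corner, and the discontinuity follows.
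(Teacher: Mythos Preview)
Your argument is correct and reaches the same formula
\[
\partial_x K(0^{\pm},t)=\mp\frac{1}{2\pi}\int_0^\infty q\,\Im\frac{\Phi_\sigma(q\e^{\I\pi})}{\Phi_\eps(q\e^{\I\pi})}\,\e^{-qt}\dif q
\]
as the paper, but the way you establish positivity of the integrand is genuinely different. The paper mirrors the three-case split of Lemma~\ref{lem: lower bound Im Psi} ($M_\sigma<M_\eps$; $M_\sigma=M_\eps$ with $F>\tau$ everywhere; $M_\sigma=M_\eps$ with $F$ attaining $\tau$), in each case exhibiting an explicit rectangle $V\subset\Delta$ on which $\nues(V)>0$ and $\sin((\alpha-\beta)\pi)$ is bounded away from zero. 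Your contrapositive---showing that $\nues$ supported only at the corner $(1,0)$ forces $\mus=\lambda\mue$---is cleaner and avoids the case distinction entirely. The paper's route has the side benefit of giving explicit $q$-powers in the lower bound (useful in Lemma~\ref{lem: lower bound Im Psi}, where growth rates matter), but for the Proposition itself your argument is more economical.

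Two small points. First, in your expansion the constraints coming from $(0,1)\times\{0\}$ and $\{1\}\times(0,1)$ yield $a_0=\rho b_0$ and $a_1=\rho b_1$, not $a_0=b_0$, $a_1=b_1$; the conclusion $\mus=\lambda\mue$ is unaffected. Second, note that $\mus=\lambda\mue$ with $\supp\mue\cap(0,1)\neq\emptyset$ is, strictly by the paper's definition, still a ``proper fractional model''; the paper's own proof tacitly excludes it as well (in Case~3 it simply picks $y$ with $F(y)>\tau$ without comment). You make this exclusion explicit, which is arguably better, since in that degenerate case \eqref{eq: DFWE} becomes the classical wave equation and the Proposition genuinely fails.
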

\begin{proof}
Using representation \eqref{eq: K in cone} we get 
\[
	\dpd{K}{x}(0^{\pm}, t) = \mp \frac{1}{2\pi}\int_{0}^{\infty}\Im \frac{\Phi_{\sigma}(q\e^{\I\pi})}{\Phi_{\eps}(q\e^{\I\pi})} q\e^{-qt}\dif q.
\]
In order to show that $\partial_{x} K$ is discontinuous at $(0,t)$, it suffices to show that the imaginary part in the above integral is non-zero on some set of positive measure. We have
\[
	\Im \frac{\Phi_{\sigma}(q\e^{\I\pi})}{\Phi_{\eps}(q\e^{\I\pi})} = -\frac{1}{\abs{\Phi_{\eps}(q\e^{\I\pi})}^{2}}\iint_{\Delta}q^{\alpha+\beta}\sin\bigl((\alpha-\beta)\pi\bigr)\dif\nues(\alpha,\beta).
\]
To show that this is non-zero, we proceed as in Lemma \ref{lem: lower bound Im Psi}. Extra care has to be taken, since the sine in the integrand may become zero, in contrast to the situation in Lemma \ref{lem: lower bound Im Psi} where $\theta\in[\pi/4,3\pi/4]$.

\mbox{}

\noindent\textbf{Case 1.} $M_{\sigma} < M_{\eps}$. Considering again the set $V = [M_{\eps}-\delta, M_{\eps}] \times [\max\{M_{\sigma}-\delta, 0\}, M_{\sigma}]$ for small enough $\delta>0$ yields
\[
	\iint_{\Delta}q^{\alpha+\beta}\sin\bigl((\alpha-\beta)\pi\bigr)\dif\nues(\alpha,\beta) \ge C\mus([\max\{M_{\sigma}-\delta, 0\}, M_{\sigma}])\mue([M_{\eps}-\delta, M_{\eps}])q^{M_{\eps}-\delta},
\]	
with 
\[
	C= \min\bigl\{\sin\bigl((M_{\eps}-M_{\sigma}-\delta)\pi\bigr), \sin\bigl((M_{\eps}-\max\{M_{\sigma}-\delta, 0\})\pi)\bigr\}.
\]
The constant $C$ is strictly positive unless $M_{\sigma}=0$ and $M_{\eps}=1$. In this case we can take $V=[\delta, 1-\delta] \times \{0\}$ and get
\[
	\iint_{\Delta}q^{\alpha+\beta}\sin\bigl((\alpha-\beta)\pi\bigr)\dif\nues(\alpha,\beta) \ge \mus(\{0\})\mue([\delta,1-\delta])\sin(\delta\pi)q^{\delta}.
\]
Since we assume to be working with proper fractional models, $\mue([\delta,1-\delta])>0$ for $\delta$ sufficiently small (otherwise this would be the Voigt model).

\bigskip

\noindent\textbf{Case 2.} $M_{\sigma} = M_{\eps} = M$ and $\forall x\in[0,M): F(x)>\tau$.
In this case we may again take $V=[z,M]\times[x,y)$ with $x<y<z<M$ and all sufficiently close to $M$ such that $\nues(V) > 0$. We get
\[
	\iint_{\Delta}q^{\alpha+\beta}\sin\bigl((\alpha-\beta)\pi\bigr)\dif\nues(\alpha,\beta) \ge \nues(V)\sin\bigl((z-y)\pi\bigr)q^{z+x}.
\]

\bigskip

\noindent\textbf{Case 3.} $M_{\sigma} = M_{\eps} = M$ and $\exists x\in [0,M)$ such that $F(x) = \tau$. We consider again $V = [M-\delta/2,M]\times[y,M-\delta)$ for small enough $\delta>0$ and $y$ satisfying $F(y)>\tau$. We get
\[
	\iint_{\Delta}q^{\alpha+\beta}\sin\bigl((\alpha-\beta)\pi\bigr)\dif\nues(\alpha,\beta) \ge \nues(V)\min\bigl\{\sin\bigl((M-y)\pi\bigr),\sin(\delta\pi/2)\bigr\}q^{y+M-\delta/2}.
\]
This works unless $M=1$ and the only $y$ satisfying $F(y)>\tau$ is $y=0$. However, in this case we have $\mue([\delta,1-\delta])>0$ for sufficiently small $\delta$ (otherwise we would be in the Zener case). Furthermore $F(0)>\tau$ together with $F(x)=\tau$, $x>0$ implies $\mus(\{0\})>0$, so we can take $V=[\delta,1-\delta]\times\{0\}$ to get the lower bound $\mus(\{0\})\mue([\delta, 1-\delta])\sin(\delta\pi)q^{\delta}$.
\end{proof}

We can conclude the following. In all cases, the fundamental solution $K$ is real analytic on the set $0<\abs{x}<ct$, $c=1/k$ (and also on $\abs{x}>ct$, $K$ being zero there). If we are not in an exceptional case, then $K$ is also smooth on the boundary of the cone: it actually belongs to the Gevrey class $G^{\beta}$ for some $\beta>0$ on the set $x\neq0$. In the exceptional case, $K$ may or may not be smooth on the boundary. On the other hand, for proper fractional models, $K$ is not of class $C^{1}$ on the half-line $x=0$, $t\ge0$.

\section{Wave velocities}\label{Sec: QA}
Given a pair of positive Radon measures $(\mus,\mue)$ on $[0,1]$, consider the pair of measures $(\tilde{\mu}_{\sigma},\tilde{\mu}_{\eps})$ defined as
\[
	\tilde{\mu}_{\sigma}(A) = \mue(1-A)=\mue(\{1-\alpha:\alpha\in A\}), \quad \tilde{\mu}_{\eps}(A)=\mus(1-A),
\]
where $A$ is a Borel subset of $[0,1]$. They are also a pair of positive Radon measures on $[0,1]$, and by considering rectangles one immediately sees that $(\mus,\mue)$ satisfies \eqref{eq:TD} if and only if $(\tilde{\mu}_{\sigma},\tilde{\mu}_{\eps})$ satisfies \eqref{eq:TD}. For the corresponding ``$\Phi$-functions'' we have
\[
	\tilde{\Phi}_{\sigma}(s) = \int_{0}^{1}s^{\alpha}\dif\tilde{\mu}_{\sigma}(\alpha) = \int_{0}^{1}s^{1-\alpha}\dif\mue(\alpha) = s\Phi_{\eps}(1/s), \quad \tilde{\Phi}_{\eps}(s) = s\Phi_{\sigma}(1/s).
\]
If $(\mus, \mue)$ is a pair of measures satisfying \eqref{eq:TD}, Proposition \ref{prop: existence tau} guarantees the existence of some $\tau \in [0,\infty)$ such that $\Phi_{\sigma}(s)/\Phi_{\eps}(s) \to \tau$ as $s\to \infty$. Applying the same Proposition to the measures $(\tilde{\mu}_{\sigma}, \tilde{\mu}_{\eps})$ yields the existence of a number $\rho \in (0,\infty]$ such that $\Phi_{\sigma}(s)/\Phi_{\eps}(s) \to \rho$ as $s\to 0$. Here, $\rho=1/\tilde{\tau}$ with $\tilde{\tau} = \lim_{s\to\infty}\tilde{\Phi}_{\sigma}(s)/\tilde{\Phi}_{\eps}(s)$.

In any case we have the inequality $\rho \ge \tau$. To see this, let $m_{\sigma}=\min\supp \mus= 1-\max \supp\tilde{\mu}_{\eps}$ and $m_{\eps}=\min\supp\mue=1-\max\supp\tilde{\mu}_{\sigma}$. We have $m_{\sigma}\le m_{\eps}$. If $m_{\sigma}<m_{\eps}$, then $\tilde{\tau}=0$ and $\rho=\infty$ and the inequality holds. Similarly, if $M_{\sigma}<M_{\eps}$ then $\tau=0$ and the inequality again holds. We may hence assume that $M_{\sigma}=M_{\eps}=M$ and $m_{\sigma}=m_{\eps}=m$. From the discussion in Section \ref{Sec: existence and uniqueness}, we see that  
\[
	\tau = \lim_{x\to M} F(x) = \lim_{x\to M}\frac{\mus([x,M])}{\mue([x,M])}, \quad \rho = \lim_{x\to m} G(x) \coloneqq \lim_{x\to m}\frac{\mus([m,x])}{\mue([m,x])},
\]
with $G(x) = 1/\tilde{F}(1-x)$. If $M=m$, then $\mus(\alpha) = a\delta(\alpha-M)$, $\mue(\alpha)=b\delta(\alpha-M)$ for $a,b>0$ and $\tau=\rho=a/b$. If $m<M$, then we apply \eqref{eq:TD} to see that for $\delta<(M-m)/2$, $G(m+\delta) \ge F(M-\delta)$. Indeed, this follows by letting $V=[M-\delta, M]\times[m, m+\delta]\subseteq \Delta$ and noting that $\nues(V)\ge0$. Hence $\rho\ge \tau$.

The constants $\tau$ and $\rho$ can be expressed in terms of material constants of the viscoelastic body as in \cite{ZO2020}, see also\cite{Mainardi}. Denote by $J(t)$ the strain response to a unit step of stress, i.e.\ $\eps(t)=J(t)$ when $\sigma(t)=H(t)$. By \eqref{eq: Laplace transform const. eq. measures}, we have that $\tilde{J}(s) = (\Phi_{\sigma}(s)/\Phi_{\eps}(s))\cdot (1/s)$. The function $J(t)$ is referred to as the \emph{creep compliance}, its limiting value at $0$, $J_{g} \coloneqq \lim_{t\to0^{+}}J(t)$, is called the \emph{glass compliance}, and its limiting value at $\infty$, $J_{e}\coloneqq \lim_{t\to\infty}J(t)$, is called the \emph{equilibrium compliance}. In view of the identities $\lim_{t\to 0^{+}}f(t) = \lim_{s\to\infty}s\tilde{f}(s)$ and $\lim_{t\to \infty}f(t) = \lim_{s\to0}s\tilde{f}(s)$, we get
\[
	J_{g} = \tau, \quad J_{e} = \rho.
\]
Similarly one defines the \emph{relaxation modulus} $G(t)$ as the stress response to a unit step of strain, and its limiting values the \emph{glass modulus} $G_{g}$ and the \emph{equilibrium modulus} $G_{e}$. We have $\tilde{G}(s) = \Phi_{\eps}(s)/\Phi_{\sigma}(s)\cdot (1/s)$, and 
\[
	G_{g} = \frac{1}{J_{g}} = \frac{1}{\tau}, \quad G_{e} = \frac{1}{J_{e}} = \frac{1}{\rho}.
\]

\bigskip

Finally we relate the constants $\tau$ and $\rho$ to wave speeds. For this we introduce a notion of weak velocity.
\begin{definition}
Let $F(x,t)$ be a function defined for real $x$ and positve $t$, and let $\mathfrak{F}_{t}(\lambda) = tF(\lambda t, t)$. We say that $F$ has \emph{weak equilibrium velocity} $v_{e}$ if there is some constant $c\neq 0$ such that $\mathfrak{F}_{t}(\lambda) \to c\delta(\lambda-v_{e})$ in the space of distributions $\D'$, as $t\to\infty$. This means that for every smooth function $\vphi$ with compact support, $\int_{-\infty}^{\infty}\mathfrak{F}_{t}(\lambda)\vphi(\lambda)\dif\lambda \to c\vphi(v_{e})$.

\noindent Similarly $F$ has \emph{weak initial velocity} $v_{i}>0$ if $\mathfrak{F}_{t}(\lambda) \to c\delta(\lambda-v_{i})$ in $\mathcal{D}'$ as $t\to 0^{+}$.
\end{definition}
For a wave packet to have weak equilibrium velocity $v_{e}$, it means that for sufficiently large time, most of the mass of the wave packet is concentrated around the point $x=v_{e}t$. However, this ``concentration'' might be rather weak, since the packet is allowed to spread out on spacial scales up to $o(t)$. For example, the dispersive Gaussian wave packet 
\[
	F(x,t) = \frac{1}{\sqrt{2\pi t}}\exp\biggl(-\frac{(x-vt)^{2}}{2t}\biggr)
\]
with mean $vt$ and standard deviation $\sqrt{t}$ has weak equilibrium velocity $v_{e}=v$.

We now compute these weak velocities for the function $K(x,t) = \partial_{t}S(x,t)$. This is the solution to the Cauchy problem \eqref{eq: DFWE}-\eqref{eq: Cauchy data} with initial data $u_{0}(x) = \delta(x)$, $v_{0}(x)=0$. Being an even function, it suffices to study $K$ for positive $x$. Recall that we denote the Heaviside function by $H$.
\begin{theorem}
Let $\mus$ and $\mue$ be measures satisfying \eqref{eq:TD} which form a proper fractional model, and which do not belong to the exceptional cases. Let $\tau$ and $\rho$ be the corresponding limiting values of $\Phi_{\sigma}/\Phi_{\eps}$. If $\tau>0$, then $H(x)K(x,t)$ has weak initial velocity $v_{i}=1/\sqrt{\tau}$. If $\rho < \infty$, then $H(x)K(x,t)$ has weak equilibrium velocity $v_{e}=1/\sqrt{\rho}$.
\end{theorem}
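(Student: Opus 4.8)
The plan is to handle both statements at once through the rescaled profile $\mathfrak{F}_{t}(\lambda)=tK(\lambda t,t)$, using that the regime $t\to0^{+}$ probes $\Psi$ near $s=\infty$ (where $\Psi\to\sqrt{\tau}$) while $t\to\infty$ probes $\Psi$ near $s=0$ (where $\Psi\to\sqrt{\rho}$). Since $K$ is even it suffices to look at $x>0$, where $\widetilde{K}(x,s)=\tfrac{\Psi(s)}{2}\e^{-xs\Psi(s)}$. Starting from the inversion formula \eqref{eq: inverse Laplace K}, I would substitute $x=\lambda t$ and $s=w/t$ and shift the Bromwich contour to a fixed vertical line $\Re w=b>0$, obtaining
\[
	\mathfrak{F}_{t}(\lambda)=\frac{1}{2\pi\I}\int_{b-\I\infty}^{b+\I\infty}\frac{\Psi(w/t)}{2}\,\e^{w(1-\lambda\Psi(w/t))}\dif w .
\]
The admissibility of the contour shift (from $\Re w=at$ to $\Re w=b$) follows from analyticity of $\Psi(w/t)$ for $\Re w>0$ and the uniform bound $|\Psi|\lesssim1$ on $\Re s>0$ — a consequence of Lemma \ref{lemma: 1} on any region bounded away from $0$ and $\infty$ together with the finite limits $\sqrt{\tau},\sqrt{\rho}$ at $0$ and $\infty$ — via the usual oscillatory-cancellation estimate on the horizontal pieces.

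Next I would test against $\varphi\in\mathcal{D}(\R)$ and interchange the integrations, writing $g(\mu)=\int_{0}^{\infty}\varphi(\lambda)\e^{-\mu\lambda}\dif\lambda$ (an entire function since $\varphi$ has compact support) to get
\[
	\int_{0}^{\infty}\mathfrak{F}_{t}(\lambda)\varphi(\lambda)\dif\lambda=\frac{1}{2\pi\I}\int_{b-\I\infty}^{b+\I\infty}\frac{\Psi(w/t)}{2}\,\e^{w}\,g\bigl(w\Psi(w/t)\bigr)\dif w .
\]
The key analytic input is that $\Psi(w/t)\to\sqrt{\rho}$ as $t\to\infty$ (respectively $\Psi(w/t)\to\sqrt{\tau}$ as $t\to0^{+}$), uniformly for $w$ on the contour; for $s\to\infty$ this is Proposition \ref{prop: existence tau} applied to $(\mus,\mue)$, and for $s\to0$ it is the same proposition applied to the reflected pair $(\tilde{\mu}_{\sigma},\tilde{\mu}_{\eps})$, the uniformity in $\arg s\in[-\pi,\pi]$ being exactly what lets $\arg(w/t)$ range over $(-\pi/2,\pi/2)$. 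Passing to the limit and evaluating the resulting $w$-integral — recognizing $\frac{1}{2\pi\I}\int_{(b)}\e^{w(1-\sqrt{\rho}\,\lambda)}\dif w$ as the inverse Laplace transform of $1$, i.e.\ $\delta(1-\sqrt{\rho}\,\lambda)=\tfrac{1}{\sqrt{\rho}}\delta(\lambda-1/\sqrt{\rho})$ — yields $\tfrac12\varphi(1/\sqrt{\rho})$, and likewise $\tfrac12\varphi(1/\sqrt{\tau})$ as $t\to0^{+}$. This identifies $v_{e}=1/\sqrt{\rho}$, $v_{i}=1/\sqrt{\tau}$, and the constant $c=\tfrac12$. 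As a consistency check and a tool for controlling the mass, integrating $\widetilde{K}(x,s)=\tfrac{\Psi(s)}{2}\e^{-xs\Psi(s)}$ over $x>0$ gives $\tfrac{1}{2s}$, whose inverse Laplace transform is $\tfrac12$; hence $\int_{0}^{\infty}K(x,t)\dif x=\tfrac12$ for every $t>0$, matching the mass of $\tfrac12\delta$.

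The hard part will be making the passage to the limit inside the contour integral rigorous, uniformly in $t$. The bare kernel $\e^{-\mu\lambda}$ decays only like $|w|^{-1}$ along the contour, which is not integrable, so I would integrate by parts in $\lambda$ repeatedly: each step peels off a boundary term proportional to $\varphi^{(j)}(0)$, which contributes an exactly computable piece (an inverse Laplace transform of $w^{-j-1}$, hence a Heaviside/polynomial in $1-\sqrt{\rho}\,\lambda$), and these boundary contributions, together with the limit of the remaining integral, telescope precisely to $\tfrac12\varphi(v_{e})$. Using $\Re\bigl(s\Psi(s)\bigr)\ge0$ for $\Re s>0$ (so that $|g(w\Psi(w/t))|\le\lVert\varphi\rVert_{1}$ and its integrated-by-parts analogues stay bounded) together with the boundedness of $\Psi$, the remaining integral acquires a $t$-uniform integrable majorant and dominated convergence applies. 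When $\tau>0$ one has $|w\Psi(w/t)|\gtrsim|w|$ and a single integration by parts already suffices; the delicate case is the equilibrium velocity when $\tau=0$, where $\Psi(w/t)\to0$ along the tail of the contour and one must combine the integrations by parts with the actual decay rate of $\Psi(s)$ as $s\to\infty$ to secure integrability. Finally, properness and non-exceptionality enter only through Theorem \ref{Thm: Gevrey}, guaranteeing that $K$ is a genuine function off $x=0$, so that $\mathfrak{F}_{t}$ is a function and all the above manipulations are legitimate.
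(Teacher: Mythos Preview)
Your overall strategy---rescale, interchange the $\lambda$- and contour-integrals, then pass to the limit by dominated convergence---is the same as the paper's. The paper, however, first shifts the contour all the way to the imaginary axis (using \eqref{eq: bound near 0} to handle the integrable singularity of $\Psi$ at $0$), because there the bound $\Im\Psi(\I y)\le -C_{2}y^{\eta}$ from Lemma~\ref{lem: lower bound Im Psi} gives genuine exponential decay $\exp(-C_{2}\lambda t\,|y|^{1+\eta})$ of the integrand. This exponential decay is exactly what justifies Fubini--Tonelli in interchanging $\int_{0}^{\infty}\!\dif\lambda$ and the contour integral, and also what furnishes the $t$-uniform majorant for dominated convergence. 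On your line $\Re w=b>0$ you never establish such decay; the ``usual oscillatory-cancellation estimate'' does not give absolute convergence, so the interchange is not justified as written. Relatedly, non-exceptionality does not enter ``only through Theorem~\ref{Thm: Gevrey}'': the quantitative lower bound on $|\Im\Psi|$ from Lemma~\ref{lem: lower bound Im Psi} is the engine of the whole argument.

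Two further gaps. First, your claim $|\Psi|\lesssim1$ on $\Re s>0$ is false when $\rho=\infty$ (see \eqref{eq: bound near 0}); this matters for the initial-velocity statement, where $\tau>0$ is assumed but $\rho$ may well be infinite. Second, and more seriously, you must treat test functions with $0\in\supp\varphi$. Your integration-by-parts scheme produces boundary terms $\varphi^{(j)}(0)/(w\Psi(w/t))^{j+1}$, but to even form $g(w\Psi(w/t))$ you must first interchange integrals, and the exponential majorant degenerates as $\lambda\to0^{+}$, so the interchange fails uniformly near $\lambda=0$. The paper handles this case by a completely different route: it uses the Hankel representation \eqref{eq: K in cone} (rescaled as in \eqref{eq: K(lambda) in cone}) to show directly that $\mathfrak{K}_{t}(\lambda)$ converges \emph{boundedly} to $0$ on a neighbourhood of $\lambda=0$, and then invokes bounded convergence. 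Without an argument of this kind, your proof of the $v_{e}$-statement is incomplete whenever $\varphi$ does not vanish near the origin.
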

\begin{proof}
By the assumptions the conclusions of Theorem \ref{th: support S} and Lemma \ref{lem: lower bound Im Psi} hold. Hence the integral \eqref{eq: inverse Laplace K} converges absolutely for $x\neq0$, and we may shift the contour of integration to the imaginary axis. Indeed, in view of \eqref{eq: bound near 0}, $\I y\Psi(\I y)$ is a continuous function of $y$, and the possible singularity of $\Psi(\I y)$ at $y=0$ is integrable. Hence for $x>0$:
\[
	K(x,t) = \frac{1}{4\pi}\int_{-\infty}^{\infty}\Psi(\I y)\exp\bigl(-\I yx\Psi(\I y)+\I ty\bigr)\dif y.
\]
Set $\mathfrak{K}_{t}(\lambda)=tH(\lambda)K(\lambda t, t)$, and let $\vphi\in\mathcal{D}$. Suppose first that $\supp \vphi \subseteq (0,\infty)$. We want to switch the order of integration in $\int_{0}^{\infty} \mathfrak{K}_{t}(\lambda)\vphi(\lambda)\dif \lambda$, which will be justified by the Fubini-Tonelli theorem. In view of \eqref{eq: bound near 0} and Lemma \ref{lem: lower bound Im Psi}, let $C_{1}, C_{2}>0$ be such that 
\[	
	\abs{\Psi(\I y)} \le \frac{C_{1}}{\sqrt{y}}, \quad 0<y<1, \qquad \Im \Psi(\I y) \le -C_{2}y^{\eta}, \quad y\ge1,
\]
where $\eta\in(-1,0)$ (recall that $\Im \Psi(\I y)\to0$ as $y\to\infty$, so $\eta<0$). Then 
\begin{multline*}
\int_{0}^{\infty}\int_{-\infty}^{\infty}\abs{\vphi(\lambda)\Psi(\I y)\exp\bigl(-\I yt\lambda\Psi(\I y)+\I ty\bigr)}\dif y\dif \lambda \\
	\lesssim \int_{0}^{\infty}\abs{\vphi(\lambda)}\biggl(\int_{0}^{1}\frac{C_{1}}{\sqrt{y}}\dif y + \int_{1}^{\infty}\e^{-C_{2}t \lambda y^{1+\eta}}\dif y\biggr)\dif\lambda
	\lesssim \int_{0}^{\infty}\abs{\vphi(\lambda)}\bigl(1+\lambda^{-\frac{1}{1+\eta}}\bigr)\dif \lambda < \infty.
\end{multline*}
Hence we may interchange the order of integration. We get
\begin{align}
	\int_{0}^{\infty}\mathfrak{K}_{t}(\lambda)\vphi(\lambda)\dif\lambda	&= \frac{t}{4\pi}\int_{-\infty}^{\infty}\Psi(\I y)\e^{\I ty}\int_{0}^{\infty}\vphi(\lambda)\e^{-\I y t\Psi(\I y)\lambda}\dif\lambda \nonumber\\
		&=\frac{t}{4\pi}\int_{-\infty}^{\infty}\Psi(\I y)\tilde{\vphi}\bigl(\I y t\Psi(\I y)\bigr)\e^{\I ty}\dif y \nonumber\\
		&= \frac{1}{4\pi}\int_{-\infty}^{\infty}\Psi(\I y/t)\tilde{\vphi}\bigl(\I y\Psi(\I y/t)\bigr)\e^{\I y}\dif y. \label{eq: int K phi}
\end{align}

Assume now that $\tau>0$. We will take the limit for $t\to 0$ in the above integral by applying the dominated convergence theorem. By \eqref{eq: sgn Im Psi}, $\Re\bigl(\I y\Psi(\I y/t)\bigr)\ge0$. Hence, for $\abs{y}\le 1$, the integrand is dominated by 
\[
	(1+\sqrt{t/\abs{y}})\sup_{\Re s\ge0}\abs{\tilde{\vphi}(s)} \le (1+\sqrt{t/\abs{y}})\int_{0}^{\infty}\abs{\vphi(\lambda)}\dif \lambda.
\]
For $\abs{y}\ge1$ and $t$ sufficiently small we have that $\Psi(\I y/t)$ is bounded and that $\Re \Psi(\I y/t) \in [\sqrt{\tau}/2, 3\sqrt{\tau}/2]$, so the integrand in \eqref{eq: int K phi} is dominated by
\[
	\sup_{\substack{\Re s \ge0 \\ \abs{\Im s} \in [\frac{\sqrt{\tau}}{2}y,\frac{3\sqrt{\tau}}{2}y]}}\abs{\tilde{\vphi}(s)} \lesssim \frac{1}{y^{2}},
\]
which follows from integrating by parts twice in $\tilde{\vphi}(s) = \int_{0}^{\infty}\vphi(\lambda)\e^{-s\lambda}\dif\lambda$. We conclude that 
\[
	\lim_{t\to 0^{+}} \int_{0}^{\infty}\mathfrak{K}_{t}(\lambda)\vphi(\lambda)\dif\lambda = \frac{1}{4\pi}\int_{-\infty}^{\infty}\sqrt{\tau}\tilde{\vphi}(\I\sqrt{\tau}y)\e^{\I y}\dif y 
	= \frac{1}{2}\vphi\biggl(\frac{1}{\sqrt{\tau}}\biggr).
\]

It remains to treat the case when $0\in \supp\vphi$. For this we will use the representation \eqref{eq: K in cone}. Let now $C_{1}$ and $C_{2}$ be constants such that $\abs{\Psi(q\e^{\pm\I\pi})} \le C_{1}/\sqrt{q}+C_{2}$, and suppose that\footnote{The general case can be reduced to these two cases: for $\vphi\in\D$, one can write $\vphi=\vphi_{1}+\vphi_{2}$ with $0\not\in\supp\vphi_{1}$ and $\supp\vphi_{2}\subseteq [-1/(2C_{2}), 1/(2C_{2})]$.} $\supp\vphi\subseteq[-1/(2C_{2}),1/(2C_{2})]$. Note that $C_{2}\ge \sqrt{\tau}$. We show that $\mathfrak{K}_{t}(\lambda)$ converges boundedly to $0$ on $[0, 1/(2C_{2})]$, so that $\int_{0}^{\infty}\mathfrak{K}_{t}(\lambda)\vphi(\lambda)\dif\lambda \to 0$. For $\lambda\le 1/(2C_{2}) <1/\sqrt{\tau}$ we have
\begin{equation}
\label{eq: K(lambda) in cone}
	\mathfrak{K}_{t}(\lambda) = \frac{1}{4\pi\I}\int_{0}^{\infty}\bigl(\Psi(q\e^{-\I\pi}/t)\e^{\lambda q\Psi(q\e^{-\I\pi}/t)} -  \Psi(q\e^{\I\pi}/t)\e^{\lambda q\Psi(q\e^{\I\pi}/t)}\bigr)\e^{-q}\dif q.
\end{equation}
The integrand is dominated by $(C_{1}/\sqrt{q}+C_{2})\e^{-q/2+C_{1}/(2C_{2})\sqrt{q}}$. By dominated convergence, $\mathfrak{K}_{t}(\lambda)$ converges pointwise to $0=\int_{0}^{\infty}(\sqrt{\tau}\e^{\lambda\sqrt{\tau}q} - \sqrt{\tau}\e^{\lambda\sqrt{\tau}q})\e^{-q}\dif q$ and is bounded by 
$$2\int_{0}^{\infty}(C_{1}/\sqrt{q}+C_{2})\e^{-q/2+C_{1}/(2C_{2})\sqrt{q}}\dif q$$ on the interval $[0,1/(2C_{2})]$. Hence by bounded convergence,
\[
	\int_{0}^{1/(2C_{2})}\mathfrak{K}_{t}(\lambda)\vphi(\lambda)\dif\lambda \to 0 \quad \text{as $t\to0$}.
\]

The proof for showing that $K$ has weak equilibrium velocity $1/\sqrt{\rho}$ when $\rho<\infty$ is analogous. Consider now constants $C_{1}$ and $C_{2}$ such that $\abs{\Psi(s)}\le C_{1}$ and $\Im \Psi(\I y) \le -C_{2}y^{\eta}$ for $y\ge1$ with $\eta \in (-1,0)$, and note that $C_{1}\ge \rho$.

Suppose first that $\vphi\in \D$ with $\supp\vphi \subseteq [\lambda_{0},\lambda_{1}]$ for some $0<\lambda_{0}<\lambda_{1}$. We want to apply dominated convergence to take the limit for $t\to\infty$ in \eqref{eq: int K phi}. For $\abs{y}\le 1$, the integrand is clearly bounded. By Lemma \ref{lem: condition A4}, $\Psi$ is non-zero on the line segment $[-\I,\I]$, so there is some $\epsilon>0$ such that $\abs{\Psi(\I u)}\ge\epsilon$ for $\abs{u}\le1$. For $u\ge0$, clearly
\[
	\arg \Phi_{\sigma}(\I u) \in [0,\pi/2], \quad \arg\Phi_{\eps}(\I u) \in [0,\pi/2],
\]
so $\arg\Psi(\I u) \in [-\pi/4,\pi/4]$. This implies that $\Re\Psi(\I u) \ge \epsilon/\sqrt{2}$ for $0\le u\le1$. Hence for $1\le \abs{y}\le t$, the integrand of \eqref{eq: int K phi} is dominated by
\[
	C_{1} \sup_{\substack{\Re s \ge0 \\ \abs{\Im s} \ge \epsilon y/\sqrt{2}}}\abs{\tilde{\vphi}(s)} \lesssim \frac{1}{y^{2}},
\]
like before. For $\abs{y}\ge t$, we have $-y\Im \Psi(\I y/t) \ge C_{2}\abs{y}^{1+\eta}t^{-\eta}$. Hence in that range we can dominate the integrand by
\[
	C_{1} \sup_{\Re s\ge C_{2}\,\abs{y}^{1+\eta}t^{-\eta}}\abs{\tilde{\vphi}(s)} \lesssim \e^{-C_{2}\lambda_{0}\,\abs{y}^{1+\eta}}.
\]
In any case, the integrand is dominated by $1/y^{2} + \e^{-C_{2}\lambda_{0}\,\abs{y}^{1+\eta}}$ in the range $\abs{y}\ge1$, and we may again apply dominated convergence to see that 
\[
	\int_{0}^{\infty}\mathfrak{K}_{t}(\lambda)\vphi(\lambda)\dif \lambda \to \frac{1}{4\pi}\int_{-\infty}^{\infty}\sqrt{\rho}\tilde{\vphi}(\I\sqrt{\rho}y)\e^{\I y}\dif y 
	= \frac{1}{2}\vphi\biggl(\frac{1}{\sqrt{\rho}}\biggr), \quad \text{as $t\to\infty$}.
\]

If $0\le \lambda \le 1/(2C_{1})$, we see from \eqref{eq: K(lambda) in cone} that 
\[
	\mathfrak{K}_{t}(\lambda) \lesssim C_{1}\int_{0}^{\infty}\e^{-q/2}\dif q, \quad \mathfrak{K}_{t}(\lambda) \to 0, \quad \text{as $t\to \infty$}.
\]
If now $\vphi\in\D$ with $\supp \vphi \subseteq [-1/(2C_{1}), 1/(2C_{1})]$, then $\int_{0}^{\infty}\mathfrak{K}_{t}(\lambda)\vphi(\lambda)\dif\lambda\to0$ as $t\to\infty$, by bounded convergence. This concludes the proof.
\end{proof}
Note that since $\rho\ge \tau$, the weak equilibrium velocity $1/\sqrt{\rho}$ of $K$ is less than or equal to the ``wave front velocity'' $1/\sqrt{\tau}$ (recall that from Theorem \ref{th: support S}, $K$ is supported in the cone $\abs{x}\le t/\sqrt{\tau}$). The weak initial velocity equals this wave front velocity.

For a more detailed asymptotic analysis of the function $K$ for some particular models (namely the fractional Zener model), we refer to \cite[Section 4]{BO}.

\section{Conclusion}\label{Sec: Concl}
In this paper we model wave propagation in viscoelastic media by means of the equations \eqref{eq: NL}, \eqref{eq: const. eq. measures}, and \eqref{eq: sm} with $\mus$ and $\mue$ positive Radon measures supported on $[0,1]$. We impose the thermodynamical condition \eqref{eq:TD} on the involved measures, from which we derive existence and uniqueness of solutions for the corresponding distributed-order fractional wave equation \eqref{eq: DFWE1}.

The fundamental solution is supported in the cone $\abs{x} \le ct$ and is real analytic for $x\neq0$ and $\abs{x}\neq ct$. For a large subclass of models, it is Gevrey regular on the set $x\neq 0$. For proper fractional models, the fundamental solution is not $C^{1}$ on the half-line $x=0$, $t\ge0$. 

We define the weak initial velocity $v_{i}$ and the weak equilibrium velocity $v_{e}$, measuring the ``wave packet speed'' at small and large times, respectively. They are evaluated as
\[
	v_{i} = \frac{1}{\sqrt{\tau}} = \lim_{s\to\infty}\frac{1}{\Psi(s)}=c, \quad v_{e} = \frac{1}{\sqrt{\rho}} = \lim_{s\to0}\frac{1}{\Psi(s)},
\]
with $\Psi$ given by \eqref{Psi}.
One can also relate these velocities to the material constants of the viscoelastic body.

\end{document}